\newtheorem{exem}{\bf Example}
\newtheorem{lemma}{\bf Lemma}
\newtheorem{proposition}{\bf Proposition}
\newtheorem{remark}{Remark}
\newtheorem{theorem}{\bf Theorem}
\newtheorem*{theorem*}{\bf Theorem}
\newtheorem*{thma}{\bf Theorem\,A}
\newtheorem*{thmb}{\bf Theorem\,B}
\newtheorem*{thmc}{\bf Theorem\,C}
\newtheorem*{cor}{\bf Corollary}
\newtheorem*{Codazzi}{\bf The Codazzi Equation}
\newtheorem*{Gauss}{\bf The Gauss Equation}
\newtheorem*{GF}{\bf The Gauss Formula}
\newtheorem*{WF}{\bf The Weingarten Formula}
\newtheorem*{WE}{\bf The Weingarten Equation}
\numberwithin{equation}{section}
\begin{document}

\title[On Ricci almost solitons arising from conformal vector fields]{On Ricci almost solitons arising from conformal vector fields}

\author[J.N.V. Gomes, J.F.B. Pereira]{J.N.V. Gomes$^1$, J.F.B. Pereira$^2$}
\author[D.M. Tsonev]{D.M. Tsonev$^3$}

\address{$^{1}$Departamento de Matem\'atica, Universidade Federal de S\~ao Carlos, Rod. Washington Lu\'is, Km 235, 13565-905 São Carlos, S\~ao Paulo, Brazil.}
\address{$^{2,3}$Departamento de Matemática, Universidade Federal do Amazonas, Av. General Rodrigo Octávio, 6200, 69080-900 Manaus, Amazonas, Brazil.}

\email{$^1$jnvgomes@ufscar.br }
\email{$^2$ joao\_jou@hotmail.com }
\email{$^3$ tsonevdm@ufam.edu.br}

\urladdr{$^1$https://www.ufscar.br}
\urladdr{$^{2,3}$https://www.ufam.edu.br}
\keywords{Ricci almost solitons, Hypersufaces, Conformal vector fields}
\subjclass[2010]{Primary 53C15; Secondary 53C17, 53C25.}

\begin{abstract}
Let $\overline{M}^{n+1}$ be a semi-Riemannian manifold of constant sectional curvature, and endowed with a conformal vector field .  Consider a Riemannian manifold  $M^n$, isometrically immersed into  $\overline{M}^{n+1}$.  With these hypotheses in mind, the ultimate goal of this paper is to investigate the intimate relationship between conformal vector fields on  $\overline{M}^{n+1}$ and the Ricci almost soliton structure on $M^n$. Assuming that the latter manifold is connected and totally umbilic, we prove that  it is naturally given a Ricci almost soliton structure by means of the tangential part of a conformal vector field on the ambient manifold. This result is rather general in nature, and few concrete examples are worked out to illustrate its true power. Furthermore, with Tashiro's theorem in mind, we reach the climax of this paper with a classification of a class of Riemannian hypersurfaces that naturally inherit their Ricci almost soliton structure by the existence of a conformal vector field on the ambient manifold.
 \end{abstract}

	\maketitle


\section{Introduction}

The ultimate goal of this article is twofold. Firstly, we show how  conformal vector fields on a semi-Riemannian manifold naturally give rise to a 
Ricci almost soliton structure onto isometrically immersed Riemannian hypersurfaces. Secondly, we classify the latter. Our motivation stems from a recent work by Aquino, de Lima, and the first author of the present article \cite{Pacific}. The latter three authors characterised gradient Ricci almost solitons immersed in either the hyperbolic space, the de Sitter space or the anti-de Sitter space. Albeit our approach is different, a deeper understanding of the present work can be incontrovertibly  achieved by comparing it with  \cite{Pacific}. For this reason, we offer a brief discussion on this matter in the concluding section.

We reach our final goal by blending together ideas from two articles. These articles were written in different epochs and contexts.
Remarkably, results from both articles beautifully manifest themselves in our investigation. Wherefore, they ought to be thought of as part of both our motivation and inspiration. In 1965, while studying conformal transformations on Riemannian manifolds, Tashiro ~\cite{Tashiro} proved his celebrated theorem which in essence revealed the structure of the manifolds admitting scalar concircular fields. More precisely, Tashiro classified all complete Riemannian manifolds of dimension $n\geq 2$ that admit a special  scalar concircular field. In this article we show that all but one of the manifolds in Tashiro's classification, provided that they are isometrically immersed into a semi-Riemannian manifold,  can be endowed with a Ricci almost soliton structure arising from a conformal vector field on the ambient space. In 2002,  Kim et al. \cite{Kim} characterised  conformal vector fields on totally umbilic hypersurfaces of semi-Riemannian space forms. It was shown in their paper, among other things, that any conformal vector field on a connected totally umbilic  hypersurface of a semi-Riemannian space form can be obtained as the tangent part of a conformal vector field on the ambient space. In this paper, we translate this latter result into the language of Ricci almost solitons. Strictly speaking, we prove that the Ricci almost soliton structure on a connected totally umbilic hypersurface immersed into a semi-Riemannian manifold arises naturally  from the tangential part of a conformal vector field on the ambient space.

It must be evident by now that this article achieves two things. First, it studies an interesting class of Ricci almost solitons. 
Second, an intimate relationship between the conceptions of umbilic hypersurface, conformal vector fields and Ricci almost soliton structure is perceived. Thus, this article will be erected upon three pillar hypotheses, namely, the suppositions  of {\it isometric immersion},  {\it completeness},  and {\it umbilicity}. While the latter hypothesis will sometimes not be required, the former two will be absolutely indispensable.

The upshot of our investigation is the following classification theorem.

\begin{theorem*}
Let $(M^n,g)$ be a complete Riemannian manifold isometrically immersed into a  semi-Riemannian manifold $(\overline{M}^{n+1}, \overline{g})$ of constant sectional curvature endowed with a conformal vector field $\overline{V}$.  Let  $\sigma$ be the conformal factor of $\overline{V}$ and $\lambda$ be the soliton function. Suppose that $\psi=\sigma|_{M^n}-\lambda$ is non-zero in some dense subset of $M^n$ and $Ric = \mu \mathcal{A}$, for some smooth function $\mu$ on $M^n$. Then, the Ricci almost soliton arising from the conformal vector field $\overline{V}$ is one of the following manifolds: 
\newline\\
A)\,\, an Euclidean space $\mathbb{R}^n$;
\newline\\
B)\,\, a sphere $\mathbb{S}^n$;
\newline\\
C)\,\, a hyperbolic space $\mathbb{H}^n$;
\newline\\
D)\,\, a pseudo-hyperbolic space of zero or negative type.
\end{theorem*}

The paper is structured as follows. We begin in Section~\ref{sec2} by defining and/or commenting upon all the concepts required to prepare the ground for our investigation. Thus, we give a swift panoramic view of basic {\it semi-Riemmanian geometry}, {\it submanifold theory}, the theory of {\it Ricci almost solitons} and {\it concircular fields}.  In Section~\ref{sec3}, we proceed with the proofs of four lemmas which not only are  employed in the proofs of the main  results of this paper but also are interesting in their own right.  In Section~\ref{sec4}, we venture into the detail of our enquiry which results in Theorems \ref{thm1} and \ref{thm2}. Theorem~\ref{thm1} yields that the Ricci almost soliton structure on a totally umbilic hypersurface is naturally given by the tangent component of some conformal vector field on the ambient space. Theorem~\ref{thm2} goes a little beyond in the following sense.  Assuming that $M^n$ is already endowed with Ricci almost soliton structure by virtue of Theorem~\ref{thm1} as well as imposing two further hypotheses, Theorem~\ref{thm2} affirms that the angle function is a special scalar concircular field on $M^n$. In Section~\ref{sec5}, we work out four examples that shed some more light onto the geometric structure of the class of the Ricci almost solitons under consideration herein. We conclude our quest, in Section~\ref{conclusions}, by perceiving the verity of the classification theorem stated above.


\section{Setting the context}\label{sec2}
The context of this article is a blend of few subareas of differential geometry. For this reason, and for the sake of clarity, we swiftly sketch in this preliminary section some basic conceptions from the {\it semi-Riemmanian geometry}, {\it submanifold theory}, and the theory of {\it Ricci almost solitons}. We also discuss the notions of  {\it concircular field}  and  {\it conformal field}, and recall a celebrated theorem by Yoshihiro Tashiro. 


\subsection{A hint of semi-Riemmanian geometry and Submanifold theory}In this section, we fix notation as well as recollect some important definitions which will be used throughout. We also recall the Codazzi and Gauss equations, which will play a fundamental role henceforth.  For greater detail, the reader may wish to consult the book ``{\it Semi-Riemannian geometry with applications to relativity}" by Barrett O'Neill \cite{O'Neill}.   

 We shall write, with an exception to tangent vector fields and inner product, an upper  bar to distinguish the semi-Riemannian from the Riemannian entities. This slight abuse of notation is meant to achieve a better visual aesthetics of the formulae. Thus, the reader should  be able to discern from the context whether $X,Y,Z$ and $\langle \,\cdot,\,\cdot \rangle$ are to be considered in the Riemannian or semi-Riemannian case. Otherwise, we write $(M^n,g)$ for an $n$-dimensional Riemannian manifold with metric $g$ and  $(\overline{M}^{n+1}, \overline{g})$ for an $(n+1)$ - dimensional semi-Riemannian manifold with metric $\overline{g}$. The corresponding Levi-Civita connections will be denoted by $\nabla$ and $\overline{\nabla}$, respectively, and an isometric immersion of  the former manifold into the latter by $f: (M^n,g) \hookrightarrow (\overline{M}^{n+1}, \overline{g})$.  Let  $\mathfrak{X}(M^n)$ be the ring of smooth tangent vector fields on $M^n$, $N$ be the field of unit normal vectors  on $M^n$,  $\mathcal{A}: \mathfrak{X}(M^n) \rightarrow \mathfrak{X}(M^n)$ be the Weingarten operator and  $\alpha : \mathfrak{X}(M^n) \times \mathfrak{X}(M^n) \rightarrow \mathfrak{X}(M^n)^{\perp}$ the second fundamental form of the immersion $f$. The following relations are well-known in Submanifold theory and will be frequently used, without a proper reference, in various computations in this paper.
\begin{GF}
 $$\overline{\nabla}_XY = \nabla_XY + \alpha (X,Y).$$ 
\end{GF}   
\begin{WF}
   $$\overline{\nabla}_XN = - \mathcal{A} X.$$
 \end{WF}
  \begin{WE} 
  $$\overline{g}(\alpha (X,Y),N) = g(\mathcal{A} X, Y).$$
\end{WE}
The following standard notation is also to be adopted. We write $\lbrace e_1,\ldots,e_n,N \rbrace$ for the frame adapted to the immersion $f$, where $N$ is the normal vector field to $M^n$. The mean curvature vector and the mean curvature function of the immersion $f$ are defined by $\vec{H}=\frac{1}{n}tr(\alpha)N$ and $H = \overline{g}(\vec{H},N)$, respectively. By writing $\epsilon_N = \overline{g}(N,N) = \pm 1$, we have a more practical definition of the mean curvature function, namely, $H=\epsilon_N\frac{1}{n}tr(\alpha)$. Both the curvature tensor $\overline{R}$ and the sectional curvature $\overline{K}$ of a  semi-Riemannian manifold $\overline{M}^{n+1}$  are defined exactly  in the same way as in the Riemannian case (clearly with respect to $\overline{g}$ and $\overline{\nabla}$), and therefore we shall not explicitly define them herein. We write $\overline{M}^{n+1}_c$ to indicate that the sectional curvature of $\overline{M}^{n+1}$ is constant\footnote{This is the last time we abuse notation and do not use the upper bar to indicate a geometric entity on the ambient manifold.}. Considering an orthonormal basis $\{e_1,\ldots,e_{n+1}\}\subset T_p\overline{M}^{n+1}$ we write $\epsilon_i=\langle e_i,e_i \rangle=\pm 1$. Then, for any $X,Y\in T_p\overline{M}^{n+1}$, the tensor of the Ricci curvature of $\overline{M}^{n+1}$ is defined by
$$\overline{Ric}(X,Y)=\displaystyle\sum_{i=1}^{n+1}\epsilon_i\langle \overline{R}(e_i,X)e_i,Y\rangle=\displaystyle\sum_{i=1}^{n+1}\epsilon_i \langle \overline{R}(e_i,Y)e_i,X\rangle.$$

Both the Codazzi and the Gauss equations for  the immersion $f$ will be of great importance for our considerations. In the 
semi-Riemannian context, they read as follows. 

\begin{Codazzi}
\begin{eqnarray}\nonumber\label{eqCodazzi}
\nonumber(\overline{R}(X,Y)Z)^{\perp} = (\nabla^{\perp}_Y \alpha)(X,Z) - (\nabla^{\perp}_X \alpha)(Y,Z).
\end{eqnarray}
\end{Codazzi}
\begin{Gauss}
$$\overline{g}(\overline{R}(X,Y)Z,W) = \overline{g}(R(X,Y)Z,W) - \overline{g}(\alpha(X,Z),\alpha(Y,W))
+ \overline{g}(\alpha(Y,Z),\alpha(X,W)).$$
\end{Gauss}

Contracting the Gauss equation  one obtains
\begin{equation}\label{eqGausscontraida}
\overline{Ric} = Ric +\epsilon_N\Omega_N-nH \mathcal{A} +\epsilon_N\mathcal{A}^2,
\end{equation}
where $\Omega_N (\cdot , \cdot) = \overline{g}(\overline{R}(\cdot,N)\cdot,N).$ Now, if the ambient space $(\overline{M}_{c}^{n+1},\overline{g})$ is a semi-Riemannian manifold with constant sectional curvature $c$, then the well-known relation
\begin{equation*}
\langle \overline{R}(X,Y)Z,W \rangle = c \left\{\langle X,Z \rangle \langle Y,W \rangle - \langle Y,Z \rangle \langle X,W \rangle\right\},\end{equation*}
for all $X,Y,Z,W \in T_p\overline{M}^{n+1}$ implies that $\Omega_N=\epsilon_Ncg$ and $\overline{Ric}=nc\overline{g}$. Thus, equation  \eqref{eqGausscontraida} reduces to
\begin{equation}\label{EqRicM}
Ric=c(n-1)g +nH\mathcal{A}-\epsilon_N\mathcal{A}^2.
\end{equation}
Notice that it is this latter equation that will be of paramount importance henceforth.


\subsection{Ricci Almost Solitons} As of the time of writing, Ricci almost solitons are on the one hand still being only recently discovered structures, and on the other, are already drawing the attention of more researchers. Ricci almost solitons were first defined in 2011 by Pigola, Rigoli, Rimoldi and Setti  in an article bearing the same name \cite{prrs}.  At first glance the definition given in \cite{prrs} was very natural to think of and deviated only very little from the definition of a Ricci soliton. And while one could rightly say, in terms of definition, that Ricci almost solitons naturally included Ricci solitons, it was quickly perceived that the former were rather different in nature than the latter. For this reason, we should like to swiftly paint with a very broad brush the landscape which is constituted by  Ricci and Ricci almost solitons.

This is rather a long story which goes back to 1982 and the seminal paper by Richard Hamilton \cite{Hamilton} in which he defined the famous Ricci flow as a parabolic type of PDE with respect to the metric, namely,
$$\dfrac{\partial\,g}{\partial t}=-2\, Ric.$$
 The short time existence and the uniqueness of the solutions was proven by Hamilton, and the self-similar solutions to the flow  were called  Ricci solitons. It was quickly understood that the Ricci flow contained a lot of geometric and topological information and many people thoroughly studied both the flow and the Ricci solitons. Many would argue that the apex of the studies of the Ricci flow  is undeniably Perelman's proof of the Poincare conjecture~\cite{p1,p2,p3}. It is also worth noting that in 2010, a year before the definition of Ricci almost solitons appeared in published form, Huai-Dong Cao had already published the now celebrated  survey  ``Recent Progress on Ricci solitons" \cite{cao}.  The assiduous studies on the matter led to an alternative definition of a Ricci soliton via a tensorial equation in terms  of the Ricci curvature, the metric and the Lie derivative of the metric. It was this latter definition that was the stepping stone for Pigola, Rigoli, Rimoldi and Setti.  
 
A Riemannian metric $g$ on a smooth manifold $M^n$ is called a {\it Ricci almost soliton} if the Ricci tensor of $g$ satisfies the equation   
\begin{equation}\label{ars}
Ric + \frac{1}{2}\mathcal{L}_Xg = \lambda g,
\end{equation}
for some vector field $X$ and a {\it soliton function}  $\lambda : M^n \longrightarrow \Bbb{R}$. By writing the triple $(M^n,g,X)$ we shall understand that the manifold $M^n$ is given a Ricci almost soliton structure. When $\lambda$ is constant, then the metric $g$ in equation~\eqref{ars} is referred to as a Ricci soliton. Since Ricci almost solitons contain Ricci solitons as a special case, we say that the Ricci almost soliton is {\it proper} if the soliton function $\lambda$ is non-constant. In this paper we shall only consider proper Ricci almost solitons, and for the sake of brevity we shall in fact omit the adjective ``proper".

The following very important remark is due. The parameters in \eqref{ars} are $g$ and $X$, while the function $\lambda$ is readily obtained by taking trace of this latter equation. Notice that, as we work exclusively with isometric immersions, we have that the metric $g$ is fixed and therefore it is the field $X$ which virtually determines the Ricci almost soliton structure $(M^n,g,X)$. This fact permeates in all our considerations henceforth and  motivates the inquiry for vector fields which will result in the Ricci almost soliton structure on $M^n$. In addition, when $X$ is a gradient vector field of some differentiable function $f: M^n \longrightarrow \Bbb{R}$, the metric $g$ is known as a {\it gradient Ricci almost soliton}. In this case equation~\eqref{ars}  reads
\begin{equation*}
Ric + \mathrm{Hess}\, f = \lambda g,
\end{equation*}
where $\mathrm{Hess}\, f$ denotes the Hessian of the potential function $f$. It must be noticed at this juncture that henceforth we shall use the term {\it potential field} to exclusively refer to a vector field which defines the gradient Ricci almost soliton structure on $M^n$. 

Let us finish our brief discussion on the Ricci almost solitons by highlighting, alas without exhausting all,  some papers in the field which we consider important. In 2012, Barros and Ribeiro  \cite{br2} gave some characterisations for compact Ricci almost solitons by proving some rigidity results as well as deriving some structural equations. In 2014, Barros, Batista and Ribeiro \cite{brb}  proved that any compact nontrivial Ricci almost soliton with constant scalar curvature is isometric to the Euclidean sphere and it is necessarily gradient. During the same time Catino and Mazzieri uploaded a preprint on the arXiv studying Gradient Einstein solitons which was later published in 2016 \cite{Catino-Mazzieri}. They considered the so called Ricci - Bourguignon flow 
$$\dfrac{\partial\,g}{\partial t}=-2 (Ric-\rho Rg),$$
where $R$ is the scalar curvature and $\rho$ some real constant. They defined {\it gradient $\rho$- Einstein solitons} by the 
equation
$$Ric+\mathrm{Hess}\, f=\rho Rg+\lambda g$$
and studied some of their properties. Notice that whereas $\rho, \lambda$ are constants, the scalar curvature $R$ is generally not. One already sees in \cite{Catino-Mazzieri} that some of the properties of these gradient $\rho$- Einstein solitons cannot be valid for gradient Ricci solitons. At this point, the reader must have already observed that the definition of a gradient $\rho$- Einstein soliton is a very special case of the general definition of Ricci almost soliton~\eqref{ars}. Now, with all that is known about Ricci solitons in mind, a natural question is whether or not Ricci almost solitons are also solutions to some geometric flow. The answer to this question is in the negative as one readily perceives in Catino et al.~\cite{Catino2}. In this latter paper it is proven the uniqueness and the short time existence of the solution of the Ricci - Bourguignon flow provided that $\rho < \dfrac{1}{2} (n-1) $ and $M$ be $n$ - dimensional compact manifold. This restriction for the parameter $\rho$ clearly means that  Ricci almost solitons do not arise in general as solutions to some appropriate geometric flow and therefore the best approach known by far to study these structures is indeed stepping upon the tensorial equation definition~\eqref{ars}. 
 
 In 2017 were published another two important papers studying Ricci almost solitons. One, as already mentioned in the introduction, was on the characterisation of immersed gradient Ricci almost solitons \cite{Pacific} and the other was a work by   
Calvi\~{n}o - Louzao et al.  \cite{EMER}.  In the latter paper, homogeneous Ricci almost solitons  were studied and it was proved that  
 a locally homogeneous proper Ricci almost soliton is either of constant sectional curvature or locally isometric to a product $R\times N(c)$, where $N(c)$ is a space of constant curvature. With this, we should like to  finish painting the panoramic picture of  the realm of Ricci almost solitons. The interested reader may take care to consult the literature for further details.


\subsection{Conformal vector fields and totally umbilic hypersurfaces } Given a semi-Riemannian manifold $(\overline{M}^{n+1},\overline{g})$, a vector field $X\in\mathfrak{X}(\overline{M}^{n+1})$ is called {\it conformal} with respect to the metric $\overline{g}$ with conformal factor $\sigma \in C^{\infty}(\overline{M}^{n+1})$, when the following equation holds
	\begin{equation*}\mathcal{L}_X\overline{g}=2 \sigma \overline{g}.
	\end{equation*} 
Notice that from now on we shall reserve the symbol $\overline{V}$ to exclusively denote throughout conformal vector fields on the ambient manifold $\overline{M}^{n+1}$. If the first and second fundamental forms of a submanifold of a semi-Riemannian manifolds are proportional, then the submanifold is called {\it totally umbilic}. Notice that, in practice, totally umbilical  tantamounts to the identity $\mathcal{A}=\epsilon_NH I$, where $I$ is the identity operator. 	 

Of fundamental importance for our method is a paper by  Kim et al. \cite{Kim} in which they proved the following theorem.
\begin{thma}\label{Kimthm}[Kim, Kim, Kim, Park~\cite{Kim}]
Let $M^n$ be a connected totally umbilic hypersurface
of a semi-Riemannian space form $\overline{M}^{n+1}_c$. Then any conformal vector 
field on $M^n$ can be obtained as the tangential part $V^{\top}$ of a conformal vector field $V$ on  $\overline{M}^{n+1}_c$. Furthermore, for any conformal vector field $W$ on $M^n$ there exists a unique conformal vector field $V$ on $\overline{M}^{n+1}_c$ which satisfies $V|_{M^n} = W$.
\end{thma}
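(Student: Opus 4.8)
The plan is to prove the stronger second assertion---that every conformal vector field $W$ on $M^n$ is the restriction $V|_{M^n}$ of a \emph{unique} conformal vector field $V$ on $\overline{M}^{n+1}_c$---since the first assertion then follows at once: if $V|_{M^n}=W$ with $W$ tangent to $M^n$, then $W$ is \emph{a fortiori} the tangential part $V^{\top}$. Before constructing the extension I would record the behaviour of the conformal equation under the decomposition $V|_{M^n}=V^{\top}+\rho N$, with $\rho\in C^{\infty}(M^n)$. Applying the Gauss and Weingarten formulas to $\overline{\nabla}_XV$ for $X,Y\in\mathfrak{X}(M^n)$ and discarding the normal terms yields $(\mathcal{L}_V\overline{g})(X,Y)=(\mathcal{L}_{V^{\top}}g)(X,Y)-2\rho\,g(\mathcal{A}X,Y)$ along $M^n$. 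Imposing umbilicity $\mathcal{A}=\epsilon_NHI$ together with $\mathcal{L}_V\overline{g}=2\sigma\overline{g}$ then gives $\mathcal{L}_{V^{\top}}g=2(\sigma|_{M^n}+\rho\epsilon_NH)\,g$, so that $V^{\top}$ is conformal on $M^n$ with factor $\sigma|_{M^n}+\rho\epsilon_NH$. This is the easy half of the correspondence; it also shows that the constructed $V$ must have $\sigma|_{M^n}$ equal to the conformal factor of $W$ whenever $\rho\equiv0$, which is the compatibility one has to respect.

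The genuine content is existence and uniqueness of the extension, and here the plan is to pass to the conformal (Möbius) model of the space form. I would realise $\overline{M}^{n+1}_c$ as an open subset of the projectivised null cone $Q^{n+1}$ of a pseudo-Euclidean space $\mathbb{R}^{n+3}$ of the appropriate signature, so that the conformal algebra of $\overline{M}^{n+1}_c$ is $\mathfrak{so}(\mathbb{R}^{n+3})$ acting by restriction of the linear (infinitesimally orthogonal) vector fields. In this picture a connected totally umbilic hypersurface is precisely a conformal subsphere, i.e.\ the section $Q^{n+1}\cap\mathbb{P}(\Pi)$ by a nondegenerate linear hyperplane $\Pi\subset\mathbb{R}^{n+3}$ with $\dim\Pi=n+2$, and the conformal algebra of $M^n$ is $\mathfrak{so}(\Pi)$. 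Under these identifications a conformal field $V$ on $\overline{M}^{n+1}_c$ satisfies $V|_{M^n}=W$ with $W$ tangent exactly when the skew endomorphism $A\in\mathfrak{so}(\mathbb{R}^{n+3})$ representing $V$ preserves $\Pi$ and restricts on $\Pi$ to the skew endomorphism $A_0\in\mathfrak{so}(\Pi)$ representing $W$.

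Uniqueness and existence then reduce to a one-line linear-algebra statement. Writing the orthogonal splitting $\mathbb{R}^{n+3}=\Pi\oplus\Pi^{\perp}$, a skew map preserving the nondegenerate subspace $\Pi$ also preserves $\Pi^{\perp}$, and hence splits as $A=A_0\oplus B$ with $B\in\mathfrak{so}(\Pi^{\perp})$; since $M^n$ has codimension one we have $\dim\Pi^{\perp}=1$, whence $\mathfrak{so}(\Pi^{\perp})=0$ and $B=0$. Thus $A=A_0\oplus0$ is the unique extension, which gives both existence and uniqueness of $V$. The forward computation of the first paragraph then certifies that this $V$ carries the correct conformal factor along $M^n$ and that $V|_{M^n}=W$ is indeed its tangential part.

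The step I expect to be the main obstacle is the second paragraph: justifying the dictionary between the intrinsic objects (totally umbilic hypersurfaces and their conformal fields) and the linear objects (hyperplanes $\Pi$ and $\mathfrak{so}(\Pi)$), including the verification that a totally umbilic hypersurface corresponds to a nondegenerate hyperplane section. A purely intrinsic alternative, avoiding the model, is a prolongation argument: conformal fields on a space form have finite type and are determined by their $2$-jet at a point $p\in M^n$. For uniqueness one checks that a conformal field vanishing on $M^n$ has vanishing $1$-jet at $p$---the tangential derivatives vanish because $V\equiv0$ on $M^n$, while the conformal factor $\sigma$ vanishes on $M^n$, forcing the normal--normal and tangent--normal parts of $\nabla V$ to vanish at $p$ as well---after which the curvature identity satisfied by $\sigma$ upgrades this to the vanishing of the full $2$-jet, so that $V\equiv0$ on the connected ambient. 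Existence is obtained by prescribing at $p$ the unique $2$-jet compatible with $W$ and the umbilic data $\mathcal{A}=\epsilon_NHI$. The degenerate (parabolic) umbilic case, in which $\Pi^{\perp}$ is null, would have to be handled separately.
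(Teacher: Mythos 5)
Your route is genuinely different from the paper's. The paper's own argument (given in the proof of Theorem~\ref{thm1}, which it says ``reminisces'' the original proof in \cite{Kim}) is a dimension count: one takes the tangential-projection map $\Psi:\mathcal{C}(\overline{M})\to\mathcal{C}(M)$, injects $\mathrm{Ker}(\Psi)$ into the $(n+2)$-dimensional space of special scalar concircular fields via $\overline{V}\mapsto \epsilon_N\overline{g}(\overline{V},N)$, and concludes surjectivity of $\Psi$ and the uniqueness of the normal completion by rank--nullity from the known dimensions $\dim\mathcal{C}(\overline{M})=\tfrac{(n+2)(n+3)}{2}$ and $\dim\mathcal{C}(M)=\tfrac{(n+1)(n+2)}{2}$. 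Your first paragraph is correct and is exactly the paper's Lemma~\ref{lemmaDerivadaLie} plus umbilicity; your reduction of the first assertion to the second is also fine.

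However, your main argument has a genuine gap that you flag but do not close: the M\"obius-model reduction only treats umbilic hypersurfaces realised as \emph{nondegenerate} hyperplane sections $Q^{n+1}\cap\mathbb{P}(\Pi)$. The degenerate case, where $\Pi^{\perp}$ is null, is neither empty nor pathological --- it is precisely the parabolic umbilic hypersurfaces with $c+\epsilon_NH^2=0$, e.g.\ horospheres in $\mathbb{H}^{n+1}$, which are connected totally umbilic Riemannian hypersurfaces squarely within the hypotheses. There $\Pi^{\perp}\subset\Pi$, the splitting $\mathbb{R}^{n+3}=\Pi\oplus\Pi^{\perp}$ and the block decomposition $A=A_0\oplus B$ both fail, and the one-line linear algebra no longer applies; moreover the dictionary of your second paragraph (umbilic hypersurface $\leftrightarrow$ hyperplane section, fields with $V|_{M^n}=W$ $\leftrightarrow$ skew maps preserving $\Pi$) is asserted rather than proved, and you yourself name it as the main obstacle. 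Your fallback prolongation argument is the more robust route and its uniqueness half is essentially complete (a conformal field vanishing on $M^n$ has vanishing $1$-jet of $(V,\sigma)$ and vanishing $\overline{\nabla}V$ along $M^n$, hence vanishes by finite type), but the existence half is only a sketch: after prescribing the $2$-jet at one point $p$ you must still show $V|_{M^n}=W$ on all of $M^n$, which requires a second finite-type argument on $M^n$ itself, separately for the tangential part and for the angle function $C$ (which satisfies the concircular equation of Proposition~\ref{prop1}). As written, the proposal does not yet prove the theorem in full generality.
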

This latter theorem is the prototype of our Theorem~\ref{thm1} and will essentially guarantee that the Ricci almost soliton structure in our general context is determined by the tangent part of a given conformal field on the ambient manifold. Furthermore, the complete description of conformal vector fields on semi-Riemannian space forms, also  given in \cite{Kim}, will enable us to elaborate examples of gradient Ricci almost solitons isometrically immersed into a semi-Riemannian space form.


\subsection{Concircular fields and Tashiro's theorem} The conception of a concircular field is classical in differential geometry.  Given a Riemannian manifold $(M^n,g)$ we say that a function $\rho \in C^{\infty}(M^n)$ is
a {\it scalar concircular field} if it satisfies the equation 
\begin{equation}\label{defconc}
\mathrm{Hess}\,\rho = \phi g,
\end{equation}
for some $\phi \in C^{\infty}(M^n)$, called the {\it characteristic function}. It is worth recalling that the term {\it concircular} originated from conformal transformations that preserve the geodesic circles, called concircular transformations themselves, see Fialkow~\cite{Fialkow} and Yano~\cite{yano}. If the characteristic function $\phi$ is of the form $\phi = - k\rho + b$, where $b$ and $k$ are real constants, we say that $\rho$ is a {\it special scalar concircular field}. Tashiro thoroughly studied these latter and proved his celebrated theorem.

\begin{thmb}\label{Tashiro}[Tashiro~\cite{Tashiro}]
Let  $M^n$ be a complete Riemannian manifold of dimension $n \geq 2$ and suppose it admits a special scalar concircular  field $\rho$ satisfying the 
equation
\begin{equation*}
\mathrm{Hess}\,\rho = (-k\rho+b)g
\end{equation*}
where $b, k \in \Bbb{R}$ and $g$ is the Riemannian metric on $M^n$. Then, $M^n$ is one of the following manifolds:
\begin{enumerate}
	\item If $b=k=0$, the direct product $\widetilde{M} \times I$ of $(n-1)$ - dimensional complete Riemannian manifold $\widetilde{M}$ with a straight line $I$,
	\item If $k=0$ but $b \neq 0$, a Euclidean space,
	\item If $k=-c^2<0$ and $\rho$ does not have any isolated stationary points, a pseudo-hyperbolic space of zero or negative type;
	\item If $k=-c^2<0$ and $\rho$ has one isolated stationary point, a  hyperbolic space of curvature $-c^2$, and
	\item If $k=c^2>0$, a spherical space of curvature $c^2$, where $c$ is a positive constant.
\end{enumerate}
\end{thmb}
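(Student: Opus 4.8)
The plan is to convert the single tensorial identity $\mathrm{Hess}\,\rho=(-k\rho+b)g$ into an ordinary differential equation along the gradient flow of $\rho$, to read off a warped-product structure from that flow, and finally to use completeness to globalise the warped product into one of the five model spaces. Writing $\phi=-k\rho+b$ for the characteristic function, I would first differentiate the defining equation to obtain $\nabla_{\nabla\rho}\nabla\rho=\mathrm{Hess}\,\rho(\nabla\rho)=\phi\,\nabla\rho$, so that the integral curves of $\nabla\rho$ are reparametrised geodesics, and $\tfrac12\nabla|\nabla\rho|^{2}=\phi\,\nabla\rho$, so that $|\nabla\rho|^{2}$ is constant along each connected level set of $\rho$ and is a function of $\rho$ alone. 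Parametrising by arc length a geodesic $\gamma$ in the direction $\nabla\rho/|\nabla\rho|$ and setting $u(s)=\rho(\gamma(s))$, the identity $u''=\mathrm{Hess}\,\rho(\gamma',\gamma')=\phi(u)$ becomes the linear equation $u''+k\,u=b$, whose qualitative behaviour according to the sign of $k$ is exactly what separates the five cases.

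On the open set $\{\nabla\rho\neq0\}$ the regular level sets $\Sigma_{t}=\rho^{-1}(t)$ are smooth hypersurfaces whose second fundamental form is, by $\mathrm{Hess}\,\rho=\phi g$, a multiple of the induced metric; hence they are totally umbilic and the normalised gradient is geodesic. This yields a local isometry of warped-product type $\big(I\times\Sigma,\,ds^{2}+h(s)^{2}g_{\Sigma}\big)$, and the same Hessian identity forces $h'/h=u''/u'$, so that $h$ is proportional to $|\nabla\rho|=|u'|$. Consequently $h$ is affine, trigonometric, or hyperbolic/exponential precisely according as $k=0$, $k>0$, or $k<0$, while the Gauss equation of the umbilic slices determines the curvature of the fibre $\Sigma$.

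Completeness is then decisive for globalisation: the gradient geodesics extend for all $s$, so the warped product persists until $\nabla\rho$ vanishes. At a stationary point $p$ the relation $\mathrm{Hess}\,\rho|_{p}=\phi(p)\,g$ shows that when $\phi(p)\neq0$ the point is isolated and non-degenerate and the slices collapse to geodesic spheres about $p$, forcing $h$ to have the normalised zero of the corresponding model. Sorting the solutions now gives the classification: $k=b=0$ makes $\mathrm{Hess}\,\rho\equiv0$, so $\nabla\rho$ is parallel and the de Rham theorem yields the splitting $\widetilde M\times I$ (case~1); $k=0$ with $b\neq0$ gives an affine warping with a single stationary point, namely $\mathbb{R}^{n}$ (case~2); $k=-c^{2}<0$ gives a $\cosh$- or exponential-type warping when $\rho$ has no isolated stationary point, hence a pseudo-hyperbolic space (case~3), and a $\sinh$-type warping with exactly one such point, hence $\mathbb{H}^{n}(-c^{2})$ (case~4); and $k=c^{2}>0$ gives a $\sin$-type warping which, to stay smooth and complete, must close up into the round sphere $\mathbb{S}^{n}(c^{2})$ (case~5).

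The genuine difficulty, I expect, lies not in the ODE but in the global rigidity: promoting the local warped-product chart to an honest isometry with the model space, and especially controlling the geometry across the critical set of $\rho$. One must show that in the cases possessing a stationary point the fibre $\Sigma$ is truly a space form of the correct curvature rather than an arbitrary complete manifold, and that completeness excludes incomplete or multiply-covered warpings. The trigonometric case $k>0$, where $h$ vanishes at both ends and the manifold must close up smoothly, is the most delicate and is exactly where an Obata-type maximum-principle argument enters; the case $k=b=0$ is the only one that avoids this analysis entirely, since there the Hessian vanishes identically and de Rham applies at once.
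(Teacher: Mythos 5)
This statement is Theorem~B of the paper, which is quoted verbatim from Tashiro's 1965 article and is \emph{not} proved in the paper; the only in-house material is the discussion immediately following it, where the authors reduce $\mathrm{Hess}\,\rho=(-k\rho+b)g$ along a geodesic to the ODE $\rho''+k\rho=b$, list its solutions in the cases $(I,A)$, $(I,B)$, $(II,A_0)$, $(II,A_{\_})$, $(II,B)$, $(III)$, count the stationary points in each, and record the resulting warped metric forms. Your outline coincides with exactly this strategy, which is Tashiro's own: the gradient flow lines are reparametrised geodesics, $|\nabla\rho|^2$ is a function of $\rho$, the regular level sets are totally umbilic with shape operator $(u''/u')\,\mathrm{Id}$, hence a local warped product with $h\propto u'$, and the sign of $k$ sorts the five cases. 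All of these computations are correct, and your matching of warping types to conclusions (exponential or $\cosh$ warping with no stationary point giving the pseudo-hyperbolic spaces, $\sinh$ warping with one stationary point giving $\mathbb{H}^n$, the $\sin$-type case closing up to $\mathbb{S}^n$ via an Obata-type argument, and de~Rham splitting when $b=k=0$) agrees with the statement and with the paper's table of solutions.

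What you have, however, is a correct \emph{outline} rather than a proof: the steps you yourself flag as the genuine difficulty --- promoting the local warped-product chart to a global isometry, proving smoothness and the space-form structure of the fibre across the critical set of $\rho$, and ruling out incomplete or multiply covered warpings --- are precisely where Tashiro's paper does its real work (his Lemmas~1.1--1.2 on adapted coordinates and $\rho$-hypersurfaces, and the analysis at stationary points). Since the paper under review simply cites \cite{Tashiro} for all of this, there is nothing in it against which those deferred steps can be checked; if you intend to supply a complete argument you would need to carry out the globalisation explicitly, most delicately in the cases where $u'$ vanishes (the vertex of $\mathbb{R}^n$, the centre of $\mathbb{H}^n$, and the two poles of $\mathbb{S}^n$).
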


At this juncture, we venture for a brief discussion on the definitions of  {\it pseudo-hyperbolic spaces of zero and negative type}. Following Tashiro's paper \cite{Tashiro}, we have the following general picture. Given a scalar concircular field $\rho$, it is known that the trajectories of the vector field $\rho^k=\rho_{\lambda}g^{\lambda k}$ are geodesic arcs except at stationary points of $\rho$, and a geodesic curve in $M^n$ containing such an arc is called a $\rho$-curve. Consider now a special scalar concircular field $\rho$ satisfying the equation
 $$\mathrm{Hess}\,\rho=(-k\rho+b)g.$$
 Given a geodesic curve $l$ with arc length $s$ one can reduce the latter equation to the following ordinary differential equation
 \begin{equation}\label{ode1}
 \dfrac{\mathrm{d}^2\,\rho}{\mathrm{d}\,s^2}+k\rho=b.
 \end{equation}
 Now, taking into account the signature of the characteristic constant $k$ one needs to consider the following three cases 
 \begin{equation*}
 k=0\,\,\, (I), \,\,\,\,\,\, k=-c^2\,\,\, (II), \,\,\,\,\,\, k=c^2\,\,\, (III), 
 \end{equation*}
 where $c$ is a positive constant. It is known that, choosing suitably the arc length $s$, the solution of \eqref{ode1} is given by
 
 \begin{equation}
 \rho(s) =
    \begin{cases}
      (I,A) & as\,\, (b=0), \\
      (I,B) & \dfrac{1}{2}bs^2+a\,\, (b\neq 0),\\
       (II, A_0) & a\,\mathrm{exp}\,cs-\dfrac{b}{c^2}\, ,\\
        (II, A_{\_}) & a\, \mathrm{sinh}\, cs-\dfrac{b}{c^2}\, ,\\
         (II, B) & a\, \mathrm{cosh}\, cs - \dfrac{b}{c^2}\, , \\
      (III) & a\, \mathrm{cos}\, cs + \dfrac{b}{c^2}\, ,
    \end{cases}       
\end{equation}
 $a$ being an arbitrary constant. It is known that if $M^n$ is complete, then in the cases $(I,A)$, $(II,A_0)$ and $(II,A_{\_})$ there are no stationary points, in the cases $(I,B)$ and $(II,B)$ there is one stationary point corresponding to $s=0$, and in case $(III)$ there are two stationary points corresponding to $s=0$ and $s=\dfrac{\pi}{c}$. Now, in the cases $(I,A)$, $(II,A_0)$ and $(II,A_{\_})$ one chooses the arc length $s$ of $\rho$-curves so that the points corresponding to $s=0$ lie on the same $\rho$-hypersurface. As a consequence the coefficient $a$ is the same for all $\rho$-curves. Taking $s$ as the $n^{th}$ coordinate $u^n$ it follows from Lemma 1.2 in \cite{Tashiro}  that the metric form of $M^n$ is given by    
 \begin{equation}\label{phm}
 \mathrm{d}\,s^2 =
    \begin{cases}
      (I,A) & a^2\,\widetilde{\mathrm{d}\,s^2}+(\mathrm{d}\,u^n)^2, \\
      (II,A_0) & (ac\,\mathrm{exp}\,cu^n)^2\widetilde{\mathrm{d}\,s^2}+(\mathrm{d}\,u^n)^2,\\
      (II, A_{\_}) & (ac\,\mathrm{cosh}\,cu^n)^2 \widetilde{\mathrm{d}\,s^2}+(\mathrm{d}\,u^n)^2 .
    \end{cases}       
\end{equation}
Notice that $\widetilde{\mathrm{d}\,s^2}$ is the metric form on $\widetilde{M}$. A complete Riemannian manifold, which is topologically a product $\widetilde{M}\times I$ and has the metric of the form $(II,A_0)$ or $(II,A_{\_})$ of \eqref{phm}, is called {\it pseudo-hyperbolic space of zero} or {\it negative type}, respectively. 
 

\section{Four  lemmas}\label{sec3}
In this section we prove four very important lemmas. The nature of the  first two is rather conceptual, as opposed to the technical character of the latter two. Before we begin with their proofs, however, the following important comments are due. Firstly, these lemmas will remain valid for semi-Riemannian hypersurfaces $M^n$ isometrically immersed into a semi-Riemannian manifold $\overline{M}^{n+1}$ up to a signature sign. Secondly, all these lemmas do not require the hypothesis of umbilicity, so this pillar hypothesis of ours will be dropped out in this section.  Finally, with both Theorem A and what we are yet to prove in Section~\ref{sec4} in mind, we should like to set, once and for all, the following notation. We shall always write henceforth $V$ for the tangential part of a conformal field $\overline{V}$ on $\overline{M}^{n+1}$ and $C:=\overline{g}(\overline{V},N)$ for the angle function.


\begin{lemma}\label{lemmaDerivadaLie}
Let $(M^n,g)$ be a Riemannian manifold isometrically immersed into a semi-Riemannian manifold $(\overline{M}^{n+1}, \overline{g})$, and $\overline{V}$ be a vector field  on $(\overline{M}^{n+1}, \overline{g})$ with tangential part $V$. Then, the following identity $\mathcal{L}_{\overline{V}} \overline{g} - \mathcal{L}_{V} g = - 2 \epsilon_N C \mathcal{A}$  holds on $M^n$.
\end{lemma}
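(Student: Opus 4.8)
The plan is to compare the two Lie derivatives by evaluating them on an arbitrary pair of tangent vector fields $X,Y\in\mathfrak{X}(M^n)$, since both $\mathcal{L}_{\overline{V}}\overline{g}$ (restricted to $M^n$) and $\mathcal{L}_V g$ are symmetric $2$-tensors. First I would split $\overline{V}$ into its tangential and normal parts along $M^n$. Because $\overline{g}(\overline{V},N)=C$ and $\overline{g}(N,N)=\epsilon_N$, the normal projection is $\big(C/\epsilon_N\big)N=\epsilon_N C N$, so that $\overline{V}=V+\epsilon_N C N$ on $M^n$. The right-hand side $\mathcal{A}$ is to be read as the scalar second fundamental form $\mathcal{A}(X,Y):=g(\mathcal{A}X,Y)$, so the asserted identity is the tensorial equality $(\mathcal{L}_{\overline{V}}\overline{g})(X,Y)-(\mathcal{L}_V g)(X,Y)=-2\epsilon_N C\, g(\mathcal{A}X,Y)$.

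The next step is to invoke the standard Levi-Civita identity $(\mathcal{L}_Z h)(X,Y)=h(\nabla^h_X Z,Y)+h(\nabla^h_Y Z,X)$, which follows from metric compatibility and the torsion-freeness of the connection. Applying it with $\overline{\nabla}$ gives $(\mathcal{L}_{\overline{V}}\overline{g})(X,Y)=\overline{g}(\overline{\nabla}_X\overline{V},Y)+\overline{g}(\overline{\nabla}_Y\overline{V},X)$, and applying it with $\nabla$ gives $(\mathcal{L}_V g)(X,Y)=g(\nabla_X V,Y)+g(\nabla_Y V,X)$. I would then differentiate the decomposition $\overline{V}=V+\epsilon_N C N$: the Gauss Formula yields $\overline{\nabla}_X V=\nabla_X V+\alpha(X,V)$, while the Leibniz rule together with the Weingarten Formula $\overline{\nabla}_X N=-\mathcal{A}X$ yields $\overline{\nabla}_X(CN)=(XC)N-C\,\mathcal{A}X$. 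Collecting terms, $\overline{\nabla}_X\overline{V}=\nabla_X V+\alpha(X,V)+\epsilon_N(XC)N-\epsilon_N C\,\mathcal{A}X$.

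Now I would pair this with the tangent field $Y$. Since $\alpha(X,V)$ and $(XC)N$ are normal while $Y$ is tangent, the Weingarten Equation (or simple orthogonality) kills those contributions, leaving $\overline{g}(\overline{\nabla}_X\overline{V},Y)=g(\nabla_X V,Y)-\epsilon_N C\, g(\mathcal{A}X,Y)$, where I also use that $g$ and $\overline{g}$ agree on tangent vectors. Symmetrizing in $X$ and $Y$, summing, and subtracting the expression for $(\mathcal{L}_V g)(X,Y)$ causes the intrinsic terms $g(\nabla_X V,Y)+g(\nabla_Y V,X)$ to cancel, and what survives is $-\epsilon_N C\big(g(\mathcal{A}X,Y)+g(\mathcal{A}Y,X)\big)$. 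Finally, the self-adjointness of the Weingarten operator $\mathcal{A}$ gives $g(\mathcal{A}X,Y)=g(\mathcal{A}Y,X)$, so this collapses to $-2\epsilon_N C\, g(\mathcal{A}X,Y)$, which is exactly $-2\epsilon_N C\,\mathcal{A}(X,Y)$, as claimed.

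This argument is computational rather than deep, so there is no serious obstacle; the only points requiring care are the sign bookkeeping with $\epsilon_N$ (using $1/\epsilon_N=\epsilon_N$ in the normal projection) and the discipline of applying each Lie-derivative identity with the appropriate connection, so that the ambient tangential derivative $\nabla_X V$ produced by the Gauss Formula cancels precisely against the intrinsic one. One should also note that the identity is purely pointwise and tensorial, so it suffices to evaluate on tangent fields $X,Y$, and the extension of $\overline{V},X,Y$ off $M^n$ needed to write $\overline{\nabla}_X\overline{V}$ is irrelevant to the value obtained.
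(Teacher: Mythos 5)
Your proposal is correct and follows essentially the same route as the paper: decompose $\overline{V}=V+\epsilon_N C N$, differentiate using the Gauss and Weingarten formulas, pair with tangent fields, and symmetrize using the self-adjointness of $\mathcal{A}$. The only difference is that you spell out the intermediate bookkeeping (orthogonality killing the normal terms, the reading of $\mathcal{A}$ as the associated $(0,2)$-tensor) more explicitly than the paper does.
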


\begin{proof}
Take $Y, Z \in \mathfrak{X}(M)$ and write $\overline{V} = V + \epsilon_N\overline{g}(\overline{V},N)N$. Therefore, we have on $M$
\begin{equation*}
\overline{\nabla}_Y \overline{V} = \nabla_Y V + \alpha (Y,V) + \epsilon_N Y \overline{g}(\overline{V},N)N - \epsilon_N \overline{g}(\overline{V},N) \mathcal{A} Y.
\end{equation*}
Notice that this latter equation remains valid if we swap $Y$ with $Z$, and therefore we deduce 
\begin{eqnarray*}
(\mathcal{L}_{\overline{V}} \overline{g})(Y,Z) &=&  \overline{g}(\overline{\nabla}_Y \overline{V},Z) + \overline{g}(\overline{\nabla}_Z \overline{V},Y) \\
&=& (\mathcal{L}_{V} g) (Y,Z) - 2 \epsilon_N \overline{g}(\overline{V},N)g(\mathcal{A}Y,Z).
\end{eqnarray*}
\end{proof}
Clearly, this lemma is valid for an arbitrary vector field on the ambient manifold. In our context, however, Lemma \ref{lemmaDerivadaLie} measures to what extent the tangent component  $V$ of the conformal vector field $\overline{V}$ deviates from being itself conformal. The second lemma is an immediate consequence of the former and determines the necessary and sufficient conditions to immerse isometrically a Ricci almost soliton into a semi-Riemannian ambient manifold.


\begin{lemma}\label{1}
Let $(M^n,g)$ be a Riemannian manifold isometrically immersed into a semi-Riemannian manifold  $(\overline{M}^{n+1}, \overline{g})$ endowed with a conformal vector field $\overline{V}$ with conformal factor $\sigma$. Then, $V$ brings forth  a Ricci almost soliton structure on $M^n$ with soliton function $\lambda$ if and only if $Ric=-\psi g- \epsilon_N C \mathcal{A}$, where $\psi=\sigma|_M-\lambda$.
\end{lemma}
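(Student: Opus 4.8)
The plan is to derive the stated equivalence directly from the conformal condition on $\overline{V}$ together with Lemma~\ref{lemmaDerivadaLie} and the contracted Gauss equation \eqref{EqRicM}. The hypothesis that $\overline{V}$ is conformal with factor $\sigma$ means $\mathcal{L}_{\overline{V}}\overline{g} = 2\sigma\overline{g}$, so restricting to tangent vectors $Y,Z \in \mathfrak{X}(M)$ gives $(\mathcal{L}_{\overline{V}}\overline{g})(Y,Z) = 2\sigma|_M\, g(Y,Z)$. Feeding this into Lemma~\ref{lemmaDerivadaLie} yields, on $M^n$,
\begin{equation*}
\tfrac{1}{2}\mathcal{L}_V g = \sigma|_M\, g + \epsilon_N C\, \mathcal{A},
\end{equation*}
which is the crucial bridge between the conformal data on the ambient space and the Lie derivative appearing in the Ricci almost soliton equation \eqref{ars}.

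Next I would simply substitute this expression into the defining equation \eqref{ars}. By definition, $V$ furnishes a Ricci almost soliton structure on $M^n$ with soliton function $\lambda$ if and only if $Ric + \tfrac{1}{2}\mathcal{L}_V g = \lambda g$. Replacing $\tfrac{1}{2}\mathcal{L}_V g$ by the right-hand side obtained above, this holds if and only if
\begin{equation*}
Ric + \sigma|_M\, g + \epsilon_N C\, \mathcal{A} = \lambda g,
\end{equation*}
and isolating $Ric$ gives $Ric = (\lambda - \sigma|_M)g - \epsilon_N C\,\mathcal{A} = -\psi g - \epsilon_N C\,\mathcal{A}$, with $\psi = \sigma|_M - \lambda$. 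This is exactly the claimed identity, and the argument is a genuine equivalence since every step is reversible.

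I do not anticipate a serious obstacle here: the lemma is essentially an algebraic rearrangement once Lemma~\ref{lemmaDerivadaLie} is in hand, and the statement itself flags it as ``an immediate consequence of the former.'' The only point requiring mild care is the bookkeeping of the sign $\epsilon_N$ and the fact that all quantities are being evaluated on $M^n$ (so $\sigma$ must be restricted to $\sigma|_M$); both are already handled by the conventions fixed just before the lemma. It is worth remarking that, unlike the derivation of \eqref{EqRicM}, this lemma does not invoke constant sectional curvature of the ambient space nor umbilicity, consistent with the author's preamble that the umbilicity hypothesis is dropped throughout this section; the contracted Gauss equation is not actually needed for this particular statement, and I would keep the proof to the two substitutions just described.
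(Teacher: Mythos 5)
Your proof is correct and follows essentially the same route as the paper: apply Lemma~\ref{lemmaDerivadaLie} to convert the conformal condition into $\tfrac{1}{2}\mathcal{L}_V g = \sigma|_M\, g + \epsilon_N C\,\mathcal{A}$ and substitute into the soliton equation \eqref{ars}, with the equivalence following because the substitution is reversible. Your closing observation that neither constant sectional curvature nor umbilicity is needed is also consistent with the paper's treatment.
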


\begin{proof}
	 By hypothesis we have  that $\mathcal{L}_{\overline{V}}\overline{g} = 2 \sigma \overline{g}$. Then, Lemma~\ref{lemmaDerivadaLie} implies
	\begin{equation}\label{eq11}
	\mathcal{L}_Vg= 2 \sigma \overline{g}+ 2 \epsilon_NC \mathcal{A},
	\end{equation}
	where $\mathcal{A}$ is the symmetric $(0,2)$-tensor associated to the  $(1,1)$-tensor $\mathcal{A}$. If $V$ defines the Ricci almost soliton structure on $M^n$, then $Ric + \frac{1}{2}\mathcal{L}_Vg = \lambda g$. Substituting \eqref{eq11} in the latter equation we obtain
	\begin{equation*}
	Ric=-\psi g - \epsilon_N C \mathcal{A},
	\end{equation*}
	where $\lambda$ is the soliton function and $\psi=\sigma|_M-\lambda$. Conversely, supposing that $Ric = - \psi g - \epsilon_N C \mathcal{A}$, then
	\begin{equation*}
	Ric = \lambda g - \frac{1}{2}\mathcal{L}_{\overline{V}}\overline{g} - \epsilon_N C \mathcal{A},
	\end{equation*}
	which, again by Lemma~\ref{lemmaDerivadaLie}, clearly reduces to  $Ric + \frac{1}{2}\mathcal{L}_Vg = \lambda g$.
\end{proof}
We could not emphasise more the fact that, albeit being a simple result at first sight, this latter lemma lies at the core of this article. Verily, its generality is readily perceived in the following remark.
\begin{remark}\label{r1}
The function $\psi$ plays an important role in this article and henceforth we shall always bear in mind that $\psi=\sigma|_M-\lambda$. Notice that in the context of Lemma~\ref{1} there is no restriction on the function $\psi$. For instance, if $M^n$ is Ricci flat and totally geodesic, then $\psi\equiv 0$. Indeed, this is exactly the case with $\mathbb{R}^n$ as we shall see in Example~\ref{exem1}. In the case when $M^n$ is Ricci flat and totally umbilical then 
$\psi= -CH$. In other words, Lemma~\ref{1} is  as general as possible, and under its hypotheses, it captures all the Ricci almost soliton structures on $M^n$ which arise from the tangent part of  a conformal vector field on the ambient space. However, in Section~\ref{sec4} we shall necessarily impose, in the context of our Theorem~\ref{thm2}, that $\psi\neq 0$ and therefore we shall naturally exclude the flat case from the corresponding considerations therein (see also Remark~\ref{r2}).
\end{remark}
 We now continue with  two technical lemmas. Notice that the first one is auxiliary in character, and it is only to be exploited in  the proof of the second.  

\begin{lemma}\label{gradC}
Let $(M^n,g)$ be a Riemannian manifold isometrically immersed into a semi-Riemannian manifold $(\overline{M}^{n+1}, \overline{g})$ endowed with a conformal vector field $\overline{V}\in\mathfrak{X}(\overline{M}^{n+1})$ with conformal factor $\sigma$. Then, the gradient of the angle function $C$ is given by the expression  $\nabla C = - (\overline{\nabla}_N\overline{V})^T - \mathcal{A}V$.
\end{lemma}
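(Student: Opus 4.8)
The plan is to characterise $\nabla C$ by testing it against an arbitrary tangent field $Y \in \mathfrak{X}(M)$, that is, to compute $Y(C) = g(\nabla C, Y)$ and then read off $\nabla C$ from the resulting expression. First I would differentiate $C = \overline{g}(\overline{V}, N)$ along $Y$ using the compatibility of $\overline{\nabla}$ with $\overline{g}$, obtaining
$$ Y(C) = \overline{g}(\overline{\nabla}_Y \overline{V}, N) + \overline{g}(\overline{V}, \overline{\nabla}_Y N). $$
The Weingarten formula turns the last term into $-\overline{g}(\overline{V}, \mathcal{A}Y)$; since $\mathcal{A}Y$ is tangent, only the tangential part $V$ of $\overline{V}$ contributes, and the self-adjointness of $\mathcal{A}$ then gives $\overline{g}(\overline{V}, \mathcal{A}Y) = g(V, \mathcal{A}Y) = g(\mathcal{A}V, Y)$. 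Hence $Y(C) = \overline{g}(\overline{\nabla}_Y \overline{V}, N) - g(\mathcal{A}V, Y)$.

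The crux is to rewrite the remaining term $\overline{g}(\overline{\nabla}_Y\overline{V}, N)$ in terms of $\overline{\nabla}_N \overline{V}$, and here I would invoke the conformal hypothesis. From the standard identity $(\mathcal{L}_{\overline{V}}\overline{g})(Y, N) = \overline{g}(\overline{\nabla}_Y \overline{V}, N) + \overline{g}(\overline{\nabla}_N \overline{V}, Y)$, the conformality $\mathcal{L}_{\overline{V}} \overline{g} = 2\sigma \overline{g}$, and the orthogonality $\overline{g}(Y, N) = 0$, one deduces
$$ \overline{g}(\overline{\nabla}_Y \overline{V}, N) = -\overline{g}(\overline{\nabla}_N \overline{V}, Y). $$
As $Y$ is tangent, the right-hand side equals $-g\big((\overline{\nabla}_N \overline{V})^T, Y\big)$.

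Combining the two displays yields $Y(C) = g\big(-(\overline{\nabla}_N \overline{V})^T - \mathcal{A}V,\, Y\big)$ for every $Y \in \mathfrak{X}(M)$, whence $\nabla C = -(\overline{\nabla}_N \overline{V})^T - \mathcal{A}V$, as claimed. The only genuinely substantive point — the one that would be missed by a computation that merely expanded $\overline{V} = V + \epsilon_N C N$ and differentiated termwise — is recognising that the conformal condition is precisely what supplies the antisymmetry $\overline{g}(\overline{\nabla}_Y \overline{V}, N) = -\overline{g}(\overline{\nabla}_N \overline{V}, Y)$ relating the tangential and normal directions; without this hypothesis the identity would fail, so I expect this to be the main (if modest) obstacle.
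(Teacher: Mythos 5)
Your proof is correct and follows essentially the same route as the paper's: differentiate $C=\overline{g}(\overline{V},N)$ along a tangent field, apply the Weingarten formula, and use the conformal identity $(\mathcal{L}_{\overline{V}}\overline{g})(Y,N)=2\sigma\overline{g}(Y,N)=0$ to trade $\overline{g}(\overline{\nabla}_Y\overline{V},N)$ for $-\overline{g}(\overline{\nabla}_N\overline{V},Y)$. The paper performs exactly this computation, so there is nothing to add.
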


\begin{proof}
By definition of the gradient we have for $X \in \mathfrak{X}(M^n)$
\begin{eqnarray*}
X C &=&  \overline{g}(\overline{\nabla}_X \overline{V},N) + \overline{g}(\overline{V},\overline{\nabla}_XN) \\
&=& (\mathcal{L}_{\overline{V}}\overline{g})(X,N) -  \overline{g}(\overline{\nabla}_{N} \overline{V},X) - \overline{g}(\overline{V},\mathcal{A}X) \\
&=& 2 \sigma \overline{g}(X,N) - \overline{g}((\overline{\nabla}_{N} \overline{V})^T,X) - g(V,\mathcal{A}X) \\
&=& - g((\overline{\nabla}_{N} \overline{V})^T,X) - g(\mathcal{A}V,X).
\end{eqnarray*}
\end{proof}


Before we state and prove our last lemma, we should like to remark that it is precisely this lemma that will manifest itself in the fact that the angle function $C$ is a special scalar concircular field.

\begin{lemma}\label{HessC2}
Let $(M^n,g)$ be a  Riemannian manifold isometrically immersed in a  semi-Riemannian manifold $(\overline{M}^{n+1}_{c}, \overline{g})$ endowed with a conformal vector field $\overline{V}\in\mathfrak{X}(\overline{M})$ with conformal factor $\sigma$. Then, the Hessian of the angle function $C$ is given by the following expression:
\begin{eqnarray*}
\mathrm{Hess}\,C(X,Y) &=&  -(cC + N \sigma) g(X,Y) + \sigma g(\mathcal{A}X,Y) + \epsilon_N C g(\mathcal{A}^2X,Y) \\
&& - g((\nabla_{V}\mathcal{A})X,Y) - g(\mathcal{A}\nabla_X V,Y) - g(\mathcal{A} \nabla_Y V,X).
\end{eqnarray*}
\end{lemma}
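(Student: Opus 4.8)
The plan is to compute the Hessian from its definition $\mathrm{Hess}\,C(X,Y) = X(YC) - (\nabla_X Y)C$, starting from the first derivative of the angle function. For any $Z \in \mathfrak{X}(M^n)$, differentiating $C = \overline{g}(\overline{V}, N)$ and invoking the Weingarten formula gives $ZC = \overline{g}(\overline{\nabla}_Z \overline{V}, N) - g(\mathcal{A}Z, V)$, which is just $g(\nabla C, Z)$ from Lemma~\ref{gradC}. Applying $X$ to the expression for $YC$, subtracting $(\nabla_X Y)C$, and using metric compatibility of $\nabla$ together with $\nabla_X(\mathcal{A}Y) = (\nabla_X\mathcal{A})Y + \mathcal{A}\nabla_X Y$, the terms carrying $\mathcal{A}\nabla_X Y$ cancel and I am left with
\begin{equation*}
\mathrm{Hess}\,C(X,Y) = \overline{g}(\overline{\nabla}_X\overline{\nabla}_Y\overline{V} - \overline{\nabla}_{\nabla_X Y}\overline{V}, N) - \overline{g}(\overline{\nabla}_Y\overline{V}, \mathcal{A}X) - g((\nabla_X\mathcal{A})Y, V) - g(\mathcal{A}Y, \nabla_X V).
\end{equation*}

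Next I would convert each piece into the target form using only submanifold bookkeeping. Writing $\overline{\nabla}_{\nabla_X Y}\overline{V} = \overline{\nabla}_{\overline{\nabla}_X Y}\overline{V} - \overline{\nabla}_{\alpha(X,Y)}\overline{V}$ turns the first bracket into the genuinely tensorial second covariant derivative $\overline{\nabla}^2_{X,Y}\overline{V} = \overline{\nabla}_X\overline{\nabla}_Y\overline{V} - \overline{\nabla}_{\overline{\nabla}_X Y}\overline{V}$ plus $\overline{\nabla}_{\alpha(X,Y)}\overline{V}$; since $\alpha(X,Y) = \epsilon_N g(\mathcal{A}X,Y)N$ and the $(N,N)$-component of the conformal equation gives $\overline{g}(\overline{\nabla}_N\overline{V}, N) = \epsilon_N\sigma$, the latter contributes exactly $\sigma g(\mathcal{A}X,Y)$. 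Decomposing $\overline{\nabla}_Y\overline{V}$ by the Gauss formula and pairing with the tangent vector $\mathcal{A}X$ yields $-\overline{g}(\overline{\nabla}_Y\overline{V}, \mathcal{A}X) = -g(\mathcal{A}\nabla_Y V, X) + \epsilon_N C g(\mathcal{A}^2 X, Y)$. Finally, because the ambient space has constant curvature the Codazzi equation makes $g((\nabla_X\mathcal{A})Y, Z)$ totally symmetric in its three arguments, so $g((\nabla_X\mathcal{A})Y, V) = g((\nabla_V\mathcal{A})X, Y)$; combined with the symmetry of $\mathcal{A}$ this produces the two remaining terms $-g((\nabla_V\mathcal{A})X, Y)$ and $-g(\mathcal{A}\nabla_X V, Y)$.

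The one genuinely nontrivial step, and the main obstacle, is the normal component $\overline{g}(\overline{\nabla}^2_{X,Y}\overline{V}, N)$; I expect everything to collapse once I establish
\begin{equation*}
\overline{g}(\overline{\nabla}^2_{X,Y}\overline{V}, N) = -(cC + N\sigma)g(X,Y).
\end{equation*}
To prove this I would set $h(A,B) = \overline{g}(\overline{\nabla}_A\overline{V}, B)$, so that the conformal equation reads $h(A,B) + h(B,A) = 2\sigma\overline{g}(A,B)$, and differentiate this symmetric relation covariantly to relate the mixed second derivatives $\overline{\nabla}^2_{X,Y}\overline{V}$, $\overline{\nabla}^2_{X,N}\overline{V}$ and $\overline{\nabla}^2_{N,Y}\overline{V}$ when paired with vectors orthogonal to the differentiated slot. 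Interchanging the order of covariant differentiation via the Ricci identity then feeds in the ambient curvature tensor, and after closing the resulting linear system one isolates $u := \overline{g}(\overline{\nabla}^2_{X,Y}\overline{V}, N)$ in the form $2u = -2(N\sigma)g(X,Y) + (\text{curvature terms})$. Evaluating those with the constant-curvature identity $\langle \overline{R}(A,B)Z, W\rangle = c\{\langle A,Z\rangle\langle B,W\rangle - \langle B,Z\rangle\langle A,W\rangle\}$, and recalling $\langle X,N\rangle = \langle Y,N\rangle = 0$ and $\langle \overline{V}, N\rangle = C$, gives $-2cC\,g(X,Y)$, whence the claim.

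The delicate point throughout is the sign of the Ricci identity in the O'Neill curvature convention adopted in the paper; I would fix it once at the outset, since a slip there flips the coefficient of $cC$ and destroys the match with the stated formula. Equivalently, and perhaps more cleanly, one may simply invoke the classical second-order identity $\overline{\nabla}^2_{X,Y}\overline{V} = \overline{R}(\overline{V},X)Y + (X\sigma)Y + (Y\sigma)X - \overline{g}(X,Y)\overline{\nabla}\sigma$ for conformal vector fields and pair it directly with $N$, which again leaves only the curvature term $\overline{g}(\overline{R}(\overline{V},X)Y, N)$ and the normal derivative $N\sigma$ to evaluate.
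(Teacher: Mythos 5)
Your argument is correct and arrives at the stated formula, but it is organised quite differently from the paper's proof. The paper starts from Lemma~\ref{gradC}, computes $g(\nabla_X\nabla C,Y)$, and is then forced to differentiate in the normal direction: it introduces local extensions with $\overline{\nabla}_N\overline{X}=\overline{\nabla}_N\overline{Y}=0$, commutes $\overline{\nabla}_X\overline{\nabla}_N\overline{V}$ into $\overline{\nabla}_N\overline{\nabla}_X\overline{V}$ via the ambient curvature, and finally splits the result as $\mathcal{S}+\mathcal{T}$ and symmetrises $\mathcal{T}$ using the conformal equation together with Lemma~\ref{lemmaDerivadaLie}. You instead compute $\mathrm{Hess}\,C(X,Y)=X(YC)-(\nabla_XY)C$ directly, keep both differentiation slots tangential, and reduce everything to the single identity $\overline{g}(\overline{\nabla}^2_{X,Y}\overline{V},N)=-(cC+N\sigma)g(X,Y)$; your submanifold bookkeeping (the $\alpha(X,Y)$ correction giving $\sigma g(\mathcal{A}X,Y)$ via $\overline{g}(\overline{\nabla}_N\overline{V},N)=\epsilon_N\sigma$, the Gauss-formula decomposition of $\overline{\nabla}_Y\overline{V}$ giving $-g(\mathcal{A}\nabla_YV,X)+\epsilon_NCg(\mathcal{A}^2X,Y)$, and the total symmetry of $g((\nabla_X\mathcal{A})Y,Z)$ from Codazzi) is all accurate. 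I checked your key identity against the paper's conventions: with O'Neill's curvature sign, by which $\overline{g}(\overline{R}(\overline{V},X)Y,N)=-cC\,g(X,Y)$, the classical second-order identity for conformal fields does yield exactly $-(cC+N\sigma)g(X,Y)$, so the sign issue you flag as the delicate point resolves in your favour. What your route buys is the elimination of the local-extension trick and of the $\mathcal{S}+\mathcal{T}$ symmetrisation; what it costs is reliance on the conformal second-derivative identity, which you must either cite (it is classical, e.g.\ Yano) or derive by the cyclic-permutation argument you sketch for $h(A,B)=\overline{g}(\overline{\nabla}_A\overline{V},B)$ --- if you derive it, be sure to carry that derivation out explicitly, since it is the only step of your plan that is not pure routine.
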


\begin{proof}
By dint of Lemma~\ref{gradC}, we readily compute
\begin{align*}
\mathrm{Hess}\, C(X,Y) &= g(\nabla_X \nabla C , Y) = - \overline{g}(\overline{\nabla}_X  (\overline{\nabla}_N\overline{V})^T,Y) - \overline{g}(\nabla_X\mathcal{A}V,Y) \\
&= - \overline{g}(\overline{\nabla}_X  \overline{\nabla}_N\overline{V},Y) + \overline{g} (\overline{\nabla}_X (\overline{\nabla}_N\overline{V})^{\perp},Y)- g(\nabla_X\mathcal{A}V,Y) \\
&=- \overline{g}(\overline{\nabla}_X  \overline{\nabla}_N\overline{V},Y) + \overline{g} (\overline{\nabla}_X (\overline{\nabla}_N\overline{V})^{\perp},Y) - g((\nabla_{X}\mathcal{A})V,Y) - g(\mathcal{A}\nabla_X V,Y).
\end{align*}
Note that
\begin{equation*}
(\overline{\nabla}_N\overline{V})^{\perp} = \epsilon_N \overline{g}(\overline{\nabla}_N\overline{V},N)N = \frac{\epsilon_N}{2}(\mathcal{L}_{\overline{V}}\overline{g})(N,N)N = \epsilon_N(\sigma\overline{g}(N,N))N = \sigma N.
\end{equation*}
Using this latter equality and the fact that $\mathcal{A}$ is a Codazzi tensor we have
\begin{align*}
\mathrm{Hess}\, C(X,Y) &= - \overline{g}(\overline{\nabla}_X  \overline{\nabla}_N\overline{V},Y) + \overline{g}(\overline{\nabla}_X(\sigma N),Y) - g((\nabla_{V}\mathcal{A})X,Y) - g(\mathcal{A}\nabla_X V,Y) \\
&=- \overline{g}(\overline{\nabla}_X  \overline{\nabla}_N\overline{V},Y) + \sigma \overline{g}(\overline{\nabla}_XN,Y) - g((\nabla_{V}\mathcal{A})X,Y) - g(\mathcal{A}\nabla_X V,Y) \\
&= - \overline{g}(\overline{\nabla}_X  \overline{\nabla}_N\overline{V},Y) -\sigma g(\mathcal{A}X,Y) - g((\nabla_{V}\mathcal{A})X,Y) - g(\mathcal{A}\nabla_X V,Y).
\end{align*}
Consider now the local extensions $\overline{X}$ and $\overline{Y}$ of $X$ and $Y$, respectively, and such that
\begin{equation}\label{extloc}
\overline{\nabla}_N \overline{X} = \overline{\nabla}_N \overline{Y} = 0.
\end{equation}
Thus,
\begin{eqnarray*}
- \overline{g}(\overline{\nabla}_X  \overline{\nabla}_N\overline{V},Y) &=& \overline{g}(\overline{R}(\overline{X},N)\overline{V},\overline{Y}) - \overline{g}(\overline{\nabla}_N  \overline{\nabla}_{\overline{X}}\overline{V},\overline{Y}) - \overline{g}(\overline{\nabla}_{[\overline{X},N]}\overline{V},\overline{Y}).
\end{eqnarray*}
The fact that $\overline{M}^{n+1}$ has constant sectional curvature $c$ together with  \eqref{extloc} imply
\begin{align*}
- \overline{g}(\overline{\nabla}_X  \overline{\nabla}_N\overline{V},Y) &= -cC \overline{g}(\overline{X},\overline{Y}) - \overline{g}(\overline{\nabla}_N  \overline{\nabla}_{\overline{X}}\overline{V},\overline{Y}) - \overline{g}(\overline{\nabla}_{\overline{\nabla}_{\overline{X}}N}\overline{V},\overline{Y}) + \overline{g}(\overline{\nabla}_{\overline{\nabla}_N\overline{X}}\overline{V},\overline{Y})\\
&= -cC g(X,Y) - N \overline{g}( \overline{\nabla}_{\overline{X}}\overline{V},\overline{Y}) - \overline{g}(\overline{\nabla}_{\overline{X}}\overline{V},\overline{\nabla}_N\overline{Y}) - \overline{g}(\overline{\nabla}_{\overline{\nabla}_XN}\overline{V},\overline{Y}) \\
&= -cC g(X,Y) - N \overline{g}( \overline{\nabla}_{\overline{X}}\overline{V},\overline{Y}) + g(\overline{\nabla}_{\mathcal{A}X}\overline{V},Y).
\end{align*}
Therefore,
\begin{eqnarray*}
\mathrm{Hess}\, C (X,Y) &=&  -cC g(X,Y) - N \overline{g}( \overline{\nabla}_{\overline{X}}\overline{V},\overline{Y}) + \overline{g}(\overline{\nabla}_{\mathcal{A}X}\overline{V},Y) - \sigma g (\mathcal{A}X,Y)\\
&&- g((\nabla_{V}\mathcal{A})X,Y) - g(\mathcal{A}\nabla_X V,Y).
\end{eqnarray*}
Moreover,
\begin{eqnarray*}
\overline{\nabla}_{\mathcal{A}X}\overline{V} &=& \overline{\nabla}_{\mathcal{A}X} (V + \epsilon_N CN) \\
&=& \overline{\nabla}_{\mathcal{A}X} V + \epsilon_N C \overline{\nabla}_{\mathcal{A}X} N  + \epsilon_N \mathcal{A}X(C)N \\
&=& \overline{\nabla}_{\mathcal{A}X} V - \epsilon_N C \mathcal{A}^2X - \epsilon_N \mathcal{A}X(C)N.
\end{eqnarray*}
Taking the tangential projection we get $(\overline{\nabla}_{\mathcal{A}X} \overline{V})^T = \nabla_{\mathcal{A}X} V  - \epsilon_N C \mathcal{A}^2X$. Thereby,
\begin{equation*}
\overline{g}(\overline{\nabla}_{\mathcal{A}X} \overline{V},Y) = \overline{g}((\overline{\nabla}_{\mathcal{A}X} \overline{V})^T,Y) = g(\nabla_{\mathcal{A}X} V,Y) - \epsilon_N C g(\mathcal{A}^2X,Y).
\end{equation*}
Thus,
\begin{eqnarray*}
\mathrm{Hess}\, C (X,Y) &=&  -cC g(X,Y) -N \overline{g}( \overline{\nabla}_{\overline{X}}\overline{V},\overline{Y}) + g(\nabla_{\mathcal{A}X} V,Y) - \epsilon_N C g(\mathcal{A}^2X,Y)\\
&& - \sigma g(\mathcal{A}X,Y)- g((\nabla_{V}\mathcal{A})X,Y) - g(\mathcal{A}\nabla_X V,Y).
\end{eqnarray*}
Now, write
$$\mathrm{Hess}\, C (X,Y) = \mathcal{S}(X,Y)+\mathcal{T}(X,Y),$$
where
\begin{equation}\label{eqS}
\mathcal{S}(X,Y)=-cC g(X,Y) - \epsilon_N C g(\mathcal{A}^2X,Y) - \sigma g(\mathcal{A}X,Y) - g((\nabla_{V}\mathcal{A})X,Y)
\end{equation}
and
\begin{equation*}
\mathcal{T}(X,Y)=-N \overline{g}( \overline{\nabla}_{\overline{X}}\overline{V},\overline{Y}) + g(\nabla_{\mathcal{A}X} V,Y) - g(\mathcal{A}\nabla_X V,Y).
\end{equation*}
Clearly, $\mathcal{T}$ is a symmetric tensor as both $\mathrm{Hess}\, C$ and $\mathcal{S}$ are. So,
\begin{eqnarray*}
2 \mathcal{T}(X,Y) &=& \mathcal{T}(X,Y) + \mathcal{T}(Y,X) \\
&=& - N [\overline{g}( \overline{\nabla}_{\overline{X}}\overline{V},\overline{Y}) + \overline{g}( \overline{\nabla}_{\overline{Y}}\overline{V},{\overline{X}})] + g(\nabla_{\mathcal{A}X} V,Y) + g(\nabla_{\mathcal{A}Y} V,X) \\
&&- g(\mathcal{A}\nabla_X V,Y) - g(\mathcal{A}\nabla_Y V,X) \\
&=& - N [(\mathcal{L}_{\overline{V}} \overline{g})(\overline{X},\overline{Y})] + (\mathcal{L}_Vg)(\mathcal{A}X,Y)-g(\nabla_Y V,\mathcal{A}X)\\
&&+ (\mathcal{L}_Vg)(\mathcal{A}Y,X)-g(\nabla_X V,\mathcal{A}Y) - g(\mathcal{A}\nabla_X V,Y) - g(\mathcal{A}\nabla_Y V,X).
\end{eqnarray*}
Using the fact that the vector field $\overline{V}$ is conformal we have 
\begin{eqnarray*}
2 \mathcal{T}(X,Y) &=& - 2N[\sigma \overline{g}(\overline{X},\overline{Y})] + (\mathcal{L}_Vg)(\mathcal{A}X,Y) + (\mathcal{L}_Vg)(\mathcal{A}Y,X) \\
&& - 2g(\mathcal{A}\nabla_X V,Y) - 2g(\mathcal{A}\nabla_Y V,X) \\
&=& -2N(\sigma) \overline{g}(\overline{X},\overline{Y}) - 2 \sigma N[\overline{g}(\overline{X},\overline{Y})] \\
&&+ (\mathcal{L}_Vg)(\mathcal{A}X,Y) + (\mathcal{L}_Vg)(\mathcal{A}Y,X) - 2g(\mathcal{A}\nabla_X V,Y) - 2g(\mathcal{A}\nabla_Y V,X)\\
&=& -2N(\sigma) \overline{g}(\overline{X},\overline{Y}) - 2 \sigma [\overline{g}(\overline{\nabla}_N \overline{X},\overline{Y}) + \overline{g}(\overline{X},\overline{\nabla}_N\overline{Y})] \\
&&+ (\mathcal{L}_Vg)(\mathcal{A}X,Y) + (\mathcal{L}_Vg)(\mathcal{A}Y,X) - 2g(\mathcal{A}\nabla_X V,Y) - 2g(\mathcal{A}\nabla_Y V,X).
\end{eqnarray*}
Now, it follows from equation~\eqref{extloc} and Lemma~\ref{lemmaDerivadaLie} that
\begin{eqnarray*}
2 \mathcal{T}(X,Y) &=& -2N(\sigma) g(X,Y) + 2 \sigma g(\mathcal{A}X,Y) + 2 \epsilon_N C g(\mathcal{A}^2X,Y) \\
&&+  2 \sigma g(\mathcal{A}X,Y) + 2 \epsilon_N C g(\mathcal{A}^2X,Y) - 2g(\mathcal{A}\nabla_X V,Y) - 2g(\mathcal{A}\nabla_Y V,X) \\
&=& -2N(\sigma) g(X,Y) + 4 \sigma g(\mathcal{A}X,Y) + 4 \epsilon_N C g(\mathcal{A}^2X,Y) \\
&&- 2g(\mathcal{A}\nabla_X V,Y) - 2g(\mathcal{A}\nabla_Y V,X).
\end{eqnarray*}
Thus,
\begin{eqnarray}\label{eqT}
\mathcal{T}(X,Y) &=& -N(\sigma) g(X,Y) + 2 \sigma g(\mathcal{A}X,Y) + 2 \epsilon_N C g(\mathcal{A}^2X,Y) \\
\nonumber &&- g(\mathcal{A}\nabla_X V,Y) - g(\mathcal{A}\nabla_Y V,X).
\end{eqnarray}
Summing up equations ~\eqref{eqS} and ~\eqref{eqT} concludes the proof of the lemma.
\end{proof}


\section{Conformal vector fields which give rise to Ricci almost soliton structure}\label{sec4}
In this section, we shall prove two theorems that reveal the intimate relationship between the Ricci almost soliton structure and conformal vector fields. These theorems will be thoroughly exploited in Section \ref{sec5} in the construction of concrete examples and will culminate in the classification theorem discussed in Section \ref{conclusions}. We begin with a brief discussion which prepares the ground for what is to follow.
\begin{proposition}\label{prop1}
Let $(M^n,g)$ be a totally umbilic Riemannian manifold isometrically immersed into a  semi-Riemannian manifold $(\overline{M}^{n+1}_{c}, \overline{g})$ endowed with a conformal vector field $\overline{V}\in\mathfrak{X}(\overline{M})$ with conformal factor $\sigma$. Then, the angle function $C$ is a special scalar concircular field on $M^n$.
\end{proposition}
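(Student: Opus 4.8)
The plan is to specialise the general Hessian formula of Lemma~\ref{HessC2} to the totally umbilic situation and then verify that the resulting proportionality factor is an affine function of $C$ with constant coefficients. Umbilicity means $\mathcal{A} = \epsilon_N H I$, so that $g(\mathcal{A}X,Y) = \epsilon_N H g(X,Y)$ and $g(\mathcal{A}^2 X,Y) = H^2 g(X,Y)$. A preliminary step I would carry out is to show that $H$ is constant on $M^n$: since the ambient space has constant curvature, $(\overline{R}(X,Y)Z)^{\perp} = 0$, so the Codazzi equation forces $(YH)g(X,Z) = (XH)g(Y,Z)$, whence $dH = 0$ for $n \geq 2$. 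In particular $(\nabla_V\mathcal{A})X = \epsilon_N(VH)X = 0$, and the term $g((\nabla_V\mathcal{A})X,Y)$ drops out.

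With these simplifications the two remaining awkward terms combine as $g(\mathcal{A}\nabla_X V,Y) + g(\mathcal{A}\nabla_Y V,X) = \epsilon_N H(\mathcal{L}_V g)(X,Y)$. Invoking Lemma~\ref{lemmaDerivadaLie} together with the conformality of $\overline{V}$ (that is, equation~\eqref{eq11}) and umbilicity once more gives $(\mathcal{L}_V g)(X,Y) = 2(\sigma + CH)g(X,Y)$, so this contribution is again a pointwise multiple of $g$. Substituting everything back into Lemma~\ref{HessC2} and collecting terms, I expect to obtain
\begin{equation*}
\mathrm{Hess}\,C = \big[-(c + \epsilon_N H^2)C - (N\sigma + \epsilon_N \sigma H)\big]\,g,
\end{equation*}
which already shows that $C$ is a scalar concircular field with characteristic function $\phi = -(c+\epsilon_N H^2)C - (N\sigma + \epsilon_N\sigma H)$.

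It then remains to recognise $\phi$ as being of the special form $-kC + b$ with $k,b$ constant. Since $c$ and $H$ are constant, $k := c + \epsilon_N H^2$ is constant, so the whole matter reduces to proving that $b := -(N\sigma + \epsilon_N\sigma H)$ is constant along $M^n$; this is the step I expect to be the main obstacle, since it is the only place where the proportionality factor could a priori fail to be affine in $C$. Here I would exploit the fact that the conformal factor of a conformal field on a space form of curvature $c$ satisfies the ambient identity $\overline{\mathrm{Hess}}\,\sigma = -c\sigma\,\overline{g}$. Differentiating $N\sigma = \overline{g}(\overline{\nabla}\sigma, N)$ along a tangent vector $X$, using the Weingarten formula $\overline{\nabla}_X N = -\mathcal{A}X = -\epsilon_N H X$ together with this identity (which kills $\overline{\mathrm{Hess}}\,\sigma(X,N)$ because $\overline{g}(X,N)=0$), one finds $X(N\sigma) = -\epsilon_N H\,X(\sigma|_{M^n})$, while $X(\epsilon_N\sigma H) = \epsilon_N H\,X(\sigma|_{M^n})$ by constancy of $H$. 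These cancel, so $Xb = 0$ for every tangent $X$ and $b$ is (locally) constant. This yields $\mathrm{Hess}\,C = (-kC + b)g$ with $k,b\in\mathbb{R}$, that is, $C$ is a special scalar concircular field, as claimed.
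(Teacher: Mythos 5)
Your proposal is correct and follows essentially the same route as the paper: specialise Lemma~\ref{HessC2} under $\mathcal{A}=\epsilon_N H I$, absorb the $\nabla V$ terms via $\mathcal{L}_V g$ and Lemma~\ref{lemmaDerivadaLie}, and then prove constancy of $b=-(N\sigma+\epsilon_N\sigma H)$ using $\overline{\mathrm{Hess}}\,\sigma=-c\sigma\overline{g}$ and the Weingarten formula. The only difference is that you explicitly justify the constancy of $H$ via the Codazzi equation, a standard fact the paper simply asserts.
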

\begin{proof}
 To prove that the angle function $C$ is a special scalar concircular field  on $M^n$ is to prove that it satisfies the equation
\begin{equation}\label{HessUmb}
\mathrm{Hess}\, C=(-kC + b)g,
\end{equation}
for some constants $k,\,b\in\mathbb{R}$. Since $M^n$ is totally umbilic we have that the mean curvature $H$ is constant and $\mathcal{A}X=\epsilon_N HX$. We readily perceive the following two implications of the latter hypothesis. Firstly, it follows from Lemma~\ref{HessC2} that 
\begin{eqnarray*}
\mathrm{Hess}\, C (X,Y) &=& -(cC + N \sigma)g(X,Y)  +\epsilon_N \sigma H g(X,Y) + \epsilon_N C H^2g(X,Y) \\
&& - \epsilon_N H g(\nabla_X V,Y) - \epsilon_N H g(\nabla_Y V,X) \\
&=& -[cC + N\sigma - \epsilon_N \sigma H - \epsilon_N C H^2] g(X,Y) - \epsilon_N H (\mathcal{L}_{V}g)(X,Y)
\end{eqnarray*}
Now, by applying Lemma~\ref{lemmaDerivadaLie} we compute further that
\begin{eqnarray*}
\mathrm{Hess}\, C (X,Y) &=& -[cC + N\sigma - \epsilon_N \sigma H - \epsilon_N C H^2] g(X,Y) - 2\epsilon_N H[\sigma+ CH] g(X,Y)\\
&=&  -[(c+\epsilon_N H^2)C + N\sigma + \epsilon_N \sigma H] g(X,Y).
\end{eqnarray*}
Wherefore, the angle function $C$ indeed satisfies equation~\eqref{HessUmb} for $k=(c+\epsilon_N H^2)$ and $b=-\Big(N\sigma + \epsilon_N \sigma H\Big)$. By hypothesis, $k$ is constant. Notice that, by taking trace of equation~\eqref{EqRicM}, we obtain for the scalar curvature  $S$ of  $M^n$ to be given by $\dfrac{S}{n(n-1)}=c+\epsilon_N H^2=k$, just as to be expected. It only remains to show that  the summand   $N\sigma + \epsilon_N \sigma H$ is also constant on $M^n$.  A well-known fact is that the existence of a conformal vector field $\overline{V}$ on $\overline{M}^{n+1}_{c}$ implies
\begin{equation}\label{eqHesssigma}
 \overline{\mathrm{Hess}}\,\sigma = - c \sigma \overline{g}. 
 \end{equation}
 The validity of the latter equation can be consulted in  Ishihara-Tashiro~\cite{Ishihara-Tashiro} or Yano~\cite{yano, yano2}. Now, taking an arbitrary vector field $X\in \mathfrak{X}(M^n)$, considering the fact that $M^n$ is totally umbilic, and making use of the Weingarten formula as well as equation~\eqref{eqHesssigma}, one readily computes that 
 \begin{align*}
 X\Big(N\sigma + \epsilon_N \sigma H\Big)&=XN\sigma+\epsilon_N\sigma HX\sigma \\
 &= \overline{\mathrm{Hess}}\sigma(X,N)+\Big(\overline\nabla_XN\Big)\sigma + \epsilon HX\sigma \\
 &= -c\sigma\overline{g}(X,N)-\mathcal{A}X\sigma+\epsilon_NHX\sigma \\
 &=0,
 \end{align*}
  which means that $b$ is constant on $M^n$ and the proof is complete.
  \end{proof} 
 Notice that this proposition, and equation~\eqref{HessUmb} in particular, lie at the heart of proof of both theorems below. We have just arrived at our first theorem, which in essence, is a manifestation of Theorem A into the realm of Ricci almost solitons.


\begin{theorem}\label{thm1}
Let  $(M^n,g)$ be a connected, totally umbilic, Riemannian hypersurface of a semi-Riemannian manifold $(\overline{M}^{n+1}_{c}, \overline{g})$. Then, the Ricci almost soliton structure on  $M^n$ is determined by the tangent component of a conformal vector field $\overline{V}$ on $\overline{M}^{n+1}_{c}$. Furthermore, for any vector field $V$ which determines a Ricci almost soliton structure on $M^n$ there exists a unique conformal vector field  $\overline{V}$ on $\overline{M}^{n+1}_{c}$ so that the tangential part of  $\overline{V}|_{M^n}$ is $V$.
\end{theorem}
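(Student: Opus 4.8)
The plan is to exploit the fact that a totally umbilic hypersurface of a space form is Einstein: once this is established, the defining equation of a Ricci almost soliton collapses into the conformal Killing equation on $M^n$, and the statement becomes an almost verbatim translation of Theorem~A of Kim, Kim, Kim and Park into the soliton language. So the strategy splits into three moves: prove $M^n$ is Einstein, convert the soliton condition into a conformality condition, and then quote Theorem~A to transfer everything to the ambient space.

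First I would record that $M^n$ is Einstein with a \emph{constant} Einstein factor. Since $M^n$ is totally umbilic we have $\mathcal{A}=\epsilon_N H I$, whence $\mathcal{A}^2=H^2 I$, and since $\overline{M}^{n+1}_c$ has constant sectional curvature the mean curvature $H$ is constant (the classical Codazzi consequence already invoked in the proof of Proposition~\ref{prop1}). Substituting these into \eqref{EqRicM} gives $Ric=(n-1)(c+\epsilon_N H^2)g$, and writing $\rho:=(n-1)(c+\epsilon_N H^2)\in\mathbb{R}$ we obtain the Einstein relation $Ric=\rho g$ with $\rho$ constant.

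Next I would feed $Ric=\rho g$ into the soliton equation \eqref{ars} to extract the key equivalence: a field $V\in\mathfrak{X}(M^n)$ determines a Ricci almost soliton structure on $M^n$ with soliton function $\lambda$ if and only if $\frac{1}{2}\mathcal{L}_V g=(\lambda-\rho)g$, i.e. if and only if $V$ is a conformal vector field on $M^n$ with conformal factor $\lambda-\rho$. The converse direction is immediate in the same way (a conformal $V$ with factor $\phi$ gives the soliton function $\lambda=\rho+\phi$), and since $\rho$ is constant, $\lambda$ is non-constant exactly when $V$ is non-homothetic, so the ``proper'' hypothesis is respected. At this point I would invoke Theorem~A on the \emph{connected} hypersurface $M^n$: the conformal field $V$ produced above is the tangential part of a conformal field $\overline{V}$ on $\overline{M}^{n+1}_c$, which gives the first assertion; and the uniqueness clause of Theorem~A provides, for this $V$, a unique such $\overline{V}$ whose tangential part along $M^n$ is $V$, which is precisely the ``furthermore'' part. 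Connectedness enters only here, guaranteeing uniqueness.

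The calculations are all elementary, so I do not expect a serious analytic obstacle; the one point that must genuinely be \emph{verified} rather than assumed is the constancy of $H$, equivalently of $\rho$. Were $H$ non-constant, $M^n$ would be merely ``almost Einstein'' and the soliton equation would fail to reduce cleanly to the conformal Killing equation, severing the bridge to Theorem~A. Thus the conceptual heart of the argument is the Einstein reduction of the second step, while the rest is bookkeeping together with a citation of Theorem~A.
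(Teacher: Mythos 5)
Your proposal is correct and takes essentially the same route as the paper: the paper's own proof likewise observes that, by the Gauss equation \eqref{EqRicM}, a totally umbilic hypersurface of $\overline{M}^{n+1}_{c}$ is Einstein, so that vector fields determining a Ricci almost soliton structure coincide with conformal vector fields on $M^n$, and then invokes Theorem~A for both existence and uniqueness. The only difference is that the paper goes on to reproduce the dimension-counting proof of Theorem~A (via the maps $\Psi$ and $\Phi$ and the Rank--Nullity Theorem) for the reader's convenience, whereas you cite it as a black box, which is legitimate.
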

\begin{proof}
For the reader familiar with the proof of Theorem A, the following simple observation would suffice. Recall that in our context the Gauss equation is naturally written in the form
$$Ric=c(n-1)g +nH\mathcal{A}-\epsilon_N\mathcal{A}^2.$$
By virtue of this equation, it is readily understood that totally umbilic hypersurfaces of semi-Riemannian manifold of constant sectional curvature are necessarily Einstein manifolds. Now, it only takes to recall the definitions of conformal vector field and Ricci almost soliton to contemplate the truth of the following affirmation. Any vector field that determines the Ricci almost soliton structure on $M^n$ is a conformal vector field on $M^n$, and vice versa. This latter affirmation, together with the proof of Theorem A, virtually proves our theorem. 

To facilitate the reader not familiar with Theorem A and its proof, however, we should like to proceed with the details. As our exposition reminisces the original proof of Kim et al., the reader may also wish to consult \cite{Kim}. In what follows, we write $\mathcal{C}(M)$ and $\mathcal{C}(\overline{M})$ for the vector spaces of conformal vector fields on $M^n$ and $\overline{M}^{n+1}_{c}$, respectively. The statement of our theorem clearly suggests that we need to find a natural way to compare these latter spaces. To this end, we define the linear map $\Psi: \mathcal{C}(\overline{M}) \to \mathcal{C}(M)$ by $\Psi(\overline{V})=V$, where $V$ is the tangent component of $\overline{V}$ on $M^n$. It ought to be evident at this juncture that it is our first duty to prove the surjectivity of $\Psi$. As $M^n$ is umbilic, it is guaranteed by Lemma~\ref{lemmaDerivadaLie} that $\Psi$ is well defined. Moreover,  as $M^n$ is totally umbilic it follows from Proposition~\ref{prop1} that $\mathrm{Hess}\, C=(-kC + b)g$ holds. Write $\mathcal{FC}(M^n)$ for the space of all smooth functions on $M^n$ satisfying this latter equation, namely, the space of all special scalar conscircular fields.
Now, observe that, if $\overline{V} \in Ker(\Psi)$, then $\overline{V}=\epsilon_NCN$. This way, we define the linear map $\Phi: Ker(\Psi) \to \mathcal{FC}(M^n)$ by $\Phi(\overline{V})=\epsilon_NC$. It is easy to see, that $\Phi$ is injective. Indeed, if $\overline{V} \in  Ker(\Phi)$, then $C=0$, which is, $\overline{V}^{\perp}=0$, and on the other hand, $\overline{V} \in Ker(\Psi)$, implies $\overline{V}=0$. It is known that $\mathrm{dim}\,\mathcal{FC}(M^n)=n+2$ (see Tashiro-Miyashita~\cite{Tashiro-Miyashita}). Thence, by applying the Rank - Nullity Theorem to the map $\Phi$, the verity of the following inequality is  confirmed
\begin{equation}\label{des1}
\mathrm{dim}\, Ker(\Psi) \leq \mathrm{dim}\, \mathcal{FC}(M^n) = n+2.
\end{equation}
It is also known that $\mathrm{dim}\, \mathcal{C}(\overline{M})=\dfrac{(n+2)(n+3)}{2}$ and $\mathrm{dim}\, \mathcal{C}(M) = \dfrac{(n+1)(n+2)}{2}$ (see Kobayashi~\cite{Kobayashi}). So, applying the Rank - Nullity Theorem  to the map $\Psi$ brings forth the inequality 
 \begin{equation}\label{des2}
 n+2\leq \mathrm{dim}\, Ker(\Psi).
 \end{equation}
 Ergo, \eqref{des1} and \eqref{des2} imply that $\mathrm{dim}\, Ker(\Psi)=\mathrm{dim}\, \mathcal{FC}(M^n)$. To put it differently, $\Phi$ is surjective and therefore bijective. This latter bijection, in turn, implies that $\Psi$ is surjective. It is this surjection that guarantees the truth of the following affirmation. For any conformal vector field $V$ on $M^n$ there exists a  conformal vector field $\overline{V}$ on $\overline{M}^{n+1}_{c}$ such that $\overline{V}^T=V$. Conversely, for any fixed  conformal vector field $V$ on $M^n$ we can choose a  conformal vector field $\overline{V}_0$ on $\overline{M}^{n+1}_{c}$ such that $\Psi(\overline{V}_0)=\overline{V}_0^T=V$. Write  $\overline{V}_1 = \epsilon_NCN$ for the orthogonal complement of $\overline{V}_0$, and recall  that $\overline{V}_1 \in Ker(\Psi)$. Since $C, -C \in \mathcal{FC}(M^n)$ and $\Phi$ is a bijection, the uniqueness of $\overline{V}_1$ is perceived. In summary, starting from a conformal vector field $V$ on $M^n$, we constructed a unique conformal vector field $\overline{V}$ on $\overline{M}^{n+1}_{c}$, satisfying $\overline{V} = \overline{V}_0+\overline{V}_1$ and such that $\overline{V}|_M=V$. With this, the proof of the theorem is complete.
\end{proof}
The following remark is now due.
 \begin{remark}\label{r2}
As we shall see soon in Section~\ref{sec5}, Theorem~\ref{thm1}, Lemma~\ref{1} and equation~\eqref{campoconfRn+1} allow us to characterise all structures of Ricci almost solitons on the space forms $\mathbb{R}^n,\, \mathbb{S}^n$, and $\mathbb{H}^n$. It will turn out that in the Euclidean case $\psi\equiv 0$, while in the other two cases $\psi\neq 0$ as well as the Ricci tensor is multiple of the Weingarten operator $\mathcal{A}$. Notice that the structure of  Ricci almost soliton on the Euclidean sphere has already been characterised by the first author in his thesis~\cite{NazaPhD} by means of the first eigenvalue of the Laplacian operator on the Euclidean sphere. Notwithstanding, a characterisation for neither $\mathbb{R}^n$ nor $\mathbb{H}^n$ that uses the latter technique remains unknown.
\end{remark}


\begin{theorem}\label{thm2}
Let $(M^n,g)$ be a complete Riemannian manifold isometrically immersed into a  semi-Riemannian manifold $(\overline{M}^{n+1}_{c}, \overline{g})$. Let  $\overline{V}$ be a conformal vector field with conformal factor  $\sigma$  whose tangent component defines a Ricci almost soliton structure on $M^n$. Let  $\lambda$ be the soliton function, and suppose that $\psi=\sigma|_{M^n}-\lambda$ is non-zero in some dense subset of $M^n$ and $Ric = \mu \mathcal{A}$, for some smooth function $\mu$ on $M^n$. Then, $C$ is a special scalar concircular field on  $(M^n,g)$.
\end{theorem}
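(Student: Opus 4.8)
The plan is to reduce the statement to Proposition~\ref{prop1} by exploiting the two standing hypotheses to force $M^n$ to be globally totally umbilic. First I would invoke Lemma~\ref{1}: since the tangent component $V$ defines a Ricci almost soliton structure with soliton function $\lambda$, we have $Ric=-\psi g-\epsilon_N C\mathcal{A}$ on $M^n$, with $\psi=\sigma|_{M^n}-\lambda$. Feeding in the hypothesis $Ric=\mu\mathcal{A}$ gives the single tensorial identity
$$(\mu+\epsilon_N C)\,\mathcal{A}=-\psi\,g .$$

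Next I would localise on the set $U=\{p\in M^n:\psi(p)\neq 0\}$, which is open (as $\psi$ is continuous) and dense by hypothesis. On $U$ the right-hand side is a nowhere-vanishing multiple of the nondegenerate metric, so the scalar $\mu+\epsilon_N C$ cannot vanish anywhere on $U$: otherwise the left-hand side would be the zero tensor at a point where the right-hand side is nonzero, a contradiction. Dividing, $\mathcal{A}=\dfrac{-\psi}{\mu+\epsilon_N C}\,I$ on $U$, i.e. the shape operator is pointwise a multiple of the identity, so $M^n$ is totally umbilic on $U$.

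The key step — and the one where the density hypothesis earns its keep — is to promote umbilicity from $U$ to all of $M^n$. Here I would observe that the trace-free part $\mathcal{A}-\tfrac1n(\tr\mathcal{A})\,I$ is a smooth $(1,1)$-tensor field on $M^n$ which vanishes identically on $U$; being continuous, it must then vanish on $\overline{U}=M^n$. Hence $\mathcal{A}=\epsilon_N H\,I$ holds everywhere and $M^n$ is totally umbilic in the full sense required by Proposition~\ref{prop1} (in particular, $H$ is constant by the Codazzi equation on a connected space-form hypersurface).

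With this in hand the conclusion is immediate: $M^n$ is totally umbilic and isometrically immersed in the space form $\overline{M}^{n+1}_c$ carrying the conformal field $\overline{V}$, so Proposition~\ref{prop1} applies and yields $\mathrm{Hess}\,C=(-kC+b)g$ with $k=c+\epsilon_N H^2$ and $b=-(N\sigma+\epsilon_N\sigma H)$ both constant, which is precisely the assertion that $C$ is a special scalar concircular field on $(M^n,g)$. The only genuine obstacle is the passage from the dense subset to the whole manifold; the remainder is algebraic bookkeeping on the two defining identities coming from Lemma~\ref{1} and the hypothesis $Ric=\mu\mathcal{A}$.
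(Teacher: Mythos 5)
Your proposal is correct and follows essentially the same route as the paper: combine Lemma~\ref{1} with the hypothesis $Ric=\mu\mathcal{A}$ to get $(\mu+\epsilon_N C)\mathcal{A}=-\psi g$, deduce umbilicity on a dense open set, and conclude via the totally umbilic computation of Proposition~\ref{prop1}. The only (immaterial) difference is the order of the density argument: the paper extends the identity $\mathrm{Hess}\,C=(-kC+b)g$ by continuity from the set $\{\mu+\epsilon_N C\neq 0\}$, whose density it obtains by contradiction, whereas you extend the vanishing of the trace-free part of $\mathcal{A}$ from $\{\psi\neq 0\}$ and then apply Proposition~\ref{prop1} globally.
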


\begin{proof}
By dint of  Lemma \ref{1} we write
\begin{equation}\label{pshimuC}
\mu \mathcal{A} = - \psi I - \epsilon_N C \mathcal{A},
\end{equation}
whence
$$\mathcal{A} = - \frac{\psi}{\mu +  \epsilon_N C} I,$$
for all $p \in M^n$ such that $\mu(p) +  \epsilon_N C(p) \neq 0$. In other words, the open set
$$\mathcal{U} = \left\lbrace p \in M^n : \mu(p) +  \epsilon_N C(p) \neq 0 \right\rbrace$$
of $M^n$ is totally umbilic on $\overline{M}_{c}^{n+1}$ with constant mean curvature $H=- \dfrac{\epsilon_N\psi}{\mu+\epsilon_N C}$. In this case, as already seen in  equation~\eqref{HessUmb}, it is valid on $\mathcal{U}$ that
\begin{equation*}
\mathrm{Hess}\, C=(-kC + b)g,
\end{equation*}
where $k=\dfrac{S}{n(n-1)}$ and $b=-N\sigma - \epsilon_N \sigma H$ are constants. Therefore, $C$ is a special scalar concircular field on $\mathcal{U}$. If $\mathcal{U}$ is dense in $M$, then, by continuity, we have that $\mathrm{Hess}\, C=(-kC + b)g$ holds on the whole of $M$. The following  remark is worth noting.  If  $\mathcal{U}$ is not dense  in $M$, then there must exis an open  neighbourhood $\mathcal{W}$ on $M-\mathcal{U}$ such that $\mu +\epsilon_NC =0$ on $\mathcal{W}$. This latter assumption, along with \eqref{pshimuC}, will imply that $\psi$ is identically zero on $\mathcal{W}$. This contradiction completes the proof of the theorem.
\end{proof}
 The following remark is worth noting. In our context the class of umbilical hypersurfaces is included in the class of surfaces satisfying $Ric=\mu\mathcal{A}$. Indeed, this is guaranteed by Gauss equation~\eqref{EqRicM}.

\section{Potential fields on some totally umbillic hypersurfaces in Euclidean and Lorentz spaces}\label{sec5}

Recall that a potential field is just a gradient vector field defining a Ricci almost structure, or equivalently, a vector field defining a gradient Ricci almost soliton structure. In this section we construct explicit examples of gradient Ricci almost solitons immersed in  Euclidean and Lorentz spaces. These examples not only illustrate the power of Theorem~\ref{thm1} but also allow us to relate our work to some already published papers, and in particular to the main results of \cite{Pacific}, see the concluding section. In other words, the general nature of Theorem~\ref{thm1} is justified herein. It is important to emphasise that, Examples~\ref{exem1}~and~\ref{exem2} can be constructed either by Theorem~\ref{thm1} or previously known techniques. Nonetheless, the only method  to construct Examples~\ref{exem3}~and~\ref{exem4}, to the best of our knowledge, is due to Theorem~\ref{thm1}. In summary, the goal of this section is twofold. On the one hand, it shows that Theorem~\ref{thm1} is not vacuous. On the other, it provides some crucial insights which lead to the proof of our classification theorem.

In \cite{Kim} is given a complete description of conformal vector fields on the semi-Euclidean space $\mathbb{R}_{\nu}^{n+1}$. In order to prepare the ground for the examples to follow, we must begin with a brief sketch of this matter. Consider the semi-Euclidean space $(\Bbb{R}_{\nu}^{n+1},\overline{g})$ with metric given by $d s^2 = \displaystyle\sum_{i=1}^{n+1}\epsilon_i dx_i^2$, where
$\epsilon_1=\cdots=\epsilon_{\nu}=-1$, $\epsilon_{\nu +1}=\cdots = \epsilon_{n+1}=1$. In this way one indeed captures semi-Riemannian metrics of arbitrary signature, and  the particular cases of Euclidean and Lorentzian spaces correspond to $\nu=0$ and $\nu=1$, respectively. We shall write $x$ for the point $(x_1,\ldots,x_{n+1})\in \Bbb{R}_{\nu}^{n+1}$ and $\vec{x}$ for the position vector in $\Bbb{R}_{\nu}^{n+1}$ with coordinates $(x_1,\ldots,x_{n+1})$. Let $\overline{V}$ be a conformal vector field on $\mathbb{R}_{\nu}^{n+1}$. Then, since our ambient space $\Bbb{R}_{\nu}^{n+1}$ is flat it is readily perceived that  equation \eqref{eqHesssigma} reduces to $\overline{\mathrm{Hess}}\,\sigma = 0$, and by integration one finds that for all $\vec{x} = (x_1,\ldots,x_{n+1}) \in \Bbb{R}_{\nu}^{n+1}$ the solution is given by
\begin{equation}\label{sigma}
\sigma(x)= \overline{g}(\vec{a},\vec{x})  + \beta,
\end{equation}
where $\vec{a} = (\epsilon_1a_1,\ldots,\epsilon_{n+1}a_{n+1})$ is some constant vector and $\beta$ is a real number. It is perhaps worth noticing swiftly here  that $ \overline{g}(\vec{a},\vec{x})= h_{\vec{a}}(x)$, where $ h_{\vec{a}}(x)$ is the height function. Now, starting from the equation $\mathcal{L}_{\overline{V}}\overline{g}=2 \sigma \overline{g}$ and performing some routine calculations one gets the following description of the conformal vector fields on $\Bbb{R}_{\nu}^{n+1}$
\begin{equation}\label{campoconfRn+1}
\overline{V}(x)= \sigma (x)\vec{x} - \frac{1}{2}\overline{g} (\vec{x}, \vec{x}) \vec{a} + B \vec{x} + \frac{1}{2}\vec{\gamma},
\end{equation}
where $\vec{\gamma} = (\gamma_1,\ldots,\gamma_{n+1})$ is some constant vector and $B$ is $(n+1)\times(n+1)$ matrix with entries satisfying $\epsilon_jb_{jk}+\epsilon_kb_{kj}=0$ for distinct $j,k\in\{1,\ldots, n+1\}$, and $b_{ii}=0$. Notice that in the Euclidean case the matrix $B$ will be an anti-symmetric matrix. It must also be noticed that $B \vec{x} + \frac{1}{2}\vec{\gamma}$ is the Killing part of the $\overline{V}$ in $\mathbb{R}_{\nu}^{n+1}$, see~\cite{O'Neill}.

Having completed our preliminary discussion, we are now ready to present our examples. For brevity, we shall only present the key points in each construction. All the examples herein can be found elaborated in detail in the thesis of the second author \cite{pereira}.  We begin with the flat case, being the simplest possible. 


\begin{exem}\label{exem1}
Consider $(\Bbb{R}^n,g)$ as a hypersurface in the Euclidean space $(\Bbb{R}^{n+1},\overline{g})$.  Theorem~\ref{thm1} immediately  implies that the tangent component of the conformal vector field  \eqref{campoconfRn+1} determines a Ricci almost soliton structure on $\mathbb{R}^n$. Notice that in this case we just have $V=\overline{V}\vert_{\mathbb{R}^n}$. As the Ricci tensor of $\Bbb{R}^n$ is identically zero the following equation is valid
\begin{equation*}
\mathcal{L}_V g = 2 \lambda g,
\end{equation*}	
where $\lambda$ is the soliton function of $\Bbb{R}^n$. Observe that we are in the case of a totally geodesic hypersurface, which is,  the Weingarten operator $\mathcal{A}$ is identically zero. Thus, it follows from Lemma~\ref{1} that $\sigma \vert_{\Bbb{R}^n} = \lambda$, where $\sigma$ is given by equation~\eqref{sigma}.
\end{exem}
\begin{remark}The intimate relationship between the Ricci almost soliton structure and the existence of conformal fields is indeed evident from this latter example. It is in fact immediately perceived that the existence of Ricci almost soliton structure on $\mathbb{R}^n$ necessitates the existence of a conformal vector field on $\mathbb{R}^{n+1}$and vice versa. It is only in this case, however, that the soliton function $\lambda$ coincides with the restriction of the conformal function $\sigma\vert_{M^n}$. In the examples to follow, we shall see that the soliton function is more complicated.\end{remark}

 We now need to prepare the ground before we construct the remaining examples. The following brief discussion enables us to kill two birds with one stone. We write

\begin{equation*}
 \mathbb{M}^n(c) =
    \begin{cases}
      \mathbb{S}^n & \mathrm {for}\,\,\, c=1, \\
       \mathbb{H}^n &  \mathrm {for}\,\,\, c=-1,
    \end{cases}       
\end{equation*}
where $c$ stands for the sectional curvature. In what follows we shall think of $ \mathbb{M}^n(c)$ as a hypersurface in the semi-Euclidean space $\mathbb{R}_{\nu}^{n+1}$. Now, two important comments are due. Firstly, the following general framework is well-known in the theory of Ricci almost solitons. Write $g_c$ for the standard metric of either of the cases above. Take a vector field $X$ on $\mathbb{M}^n(c)$ such that $X(x) = v^T(x)$, where $v \in \mathbb{R}^{n+1}_{\nu}$ is a constant  space-like unit vector, and let $h_v: \mathbb{M}^n(c) \rightarrow \mathbb{R}$ be the height function on $\mathbb{M}^n(c)$ corresponding to the vector $v$, given by $h_v(x) = g_c(\vec{x},v)$, for all $x \in \mathbb{M}^n(c)$. Therefore, $X(x)= \nabla h_v(x)$ and ${\mathrm{Hess}}\, h_v = -ch_v g_c$. It is known that the Ricci tensor of $g_c$ is given by $Ric = c(n-1) g_c$. Thus,  $(\mathbb{M}^n(c),g_c,\nabla h_v)$ is a gradient Ricci almost soliton with soliton function $\lambda_v = c(n-1) - ch_v$. Secondly, we know from \cite{Kim} that all the conformal vector fields on $\mathbb{M}^n(c)$ are given by	 
	 \begin{equation}\label{CampopotencialSn}
	 \overline{V} \vert_{\mathbb{M}^n(c)} = \Big[-\varepsilon_N \overline{g}( \vec{x} , \vec{\gamma}) \vec{x} + B\vec{x} + \vec{\gamma}\Big] \Big\vert_{\mathbb{M}^n(c)},
	\end{equation}
	for some vector field $\vec{\gamma}$ on $\mathbb{R}_{\nu}^{n+1}$. Notice that the Killing component $ B\vec{x}\vert_{\mathbb{M}^n(c)}$ of the vector field \eqref{CampopotencialSn} does not have a normal part. Notice that this latter equation is of paramount importance in our construction. With these two comments in mind, we are now in a position to elaborate on the subsequent examples.

\begin{exem}\label{exem2}
	 Let $\mathbb{M}^n(c)$ be as above and let $\overline{V}$ be a conformal vector field in $\mathbb{R}_{\nu}^{n+1}$. For all $x \in \mathbb{M}^n(c)$ take the normal unit vector field $N=\vec{x}$ to $\mathbb{M}^n(c)$ as the position vector, and  recall that  $\overline{g}(\vec{x},\vec{x})=\varepsilon_N$. As the vector field $ \overline{V} \vert_{\mathbb{M}^n(c)}$ is decomposed into the sum of its tangential part $V$ and its normal part, we clearly have for any $X\in \mathfrak{X}(M^n)$ that $\langle  \overline{V} \vert_{\mathbb{M}^n(c)}, X\rangle = \langle V, X\rangle$. Now it takes a simple calculation to show that the tangent component of  the vector field $ \overline{V} \vert_{\mathbb{M}^n(c)} $ is given by 
\begin{equation}\label{CampopotencialSn1}
V = \vec{\gamma}^T + B\vec{x}\vert_{\mathbb{M}^n(c)}.
\end{equation}	
  By dint of Theorem~\ref{thm1} it is $V$ that determines the Ricci almost soliton structure. More precisely, it is virtually $ \vec{\gamma}^T$ which will define the latter structure as the Killing part of $V$ will naturally step out. Writing $h_{\vec{\gamma}}(x) = \overline{g} (\vec{x},\vec{\gamma})$ for the height function in the direction of the vector $\vec{\gamma}$ we observe that $\nabla h_{\vec{\gamma}} = \vec{\gamma}^T$, which is, we indeed have a gradient Ricci almost soliton. In other words, $Ric + \frac{1}{2}\mathcal{L}_V g_c = \lambda g_c$ holds. Since $Ric = \varepsilon_N(n-1)g_c$ and $\mathcal{L}_Vg_c = - 2 \varepsilon_N h_{\vec{\gamma}} g_c$ we obtain for the soliton function 
 \begin{equation}\label{funcaosolitonSn}
\lambda_{\vec{\gamma}} = \varepsilon_N\Big(n-1-h_{\vec{\gamma}}\Big).
\end{equation} 
This way, we have shown that $\mathbb{M}^n(c)$ has a structure of a gradient Ricci almost soliton, naturally given by the conformal vector field~\eqref{CampopotencialSn1}.
\end{exem}

The construction of the next two examples is motivated by the following observations. Our quest for a Ricci almost soliton structure on the pseudo-hyperbolic spaces of zero or negative type is naturally motivated by Tashiro's theorem. Furthermore, in \cite{prrs}, Pigola et al. considered the warped product  $I\times_{\varphi}\Sigma$ of the real interval $I\subset\mathbb{R}$ with a Riemannian manifold $\Sigma$,  and warping function $\varphi:\,I\longrightarrow \mathbb{R}^{+}$. They proved under which conditions both $\Sigma$  and $I\times_{\varphi}\Sigma$ are  Einstein manifolds as well as that $I\times_{\varphi}\Sigma$  has a structure of a gradient Ricci almost soliton.

\begin{exem}\label{exem3}
Consider the  pseudo-hyperbolic space $(\Bbb{R} \times_{e^t} \Bbb{R}^{n-1},g)$ of zero type as a hypersurface in the Lorentz space $(\Bbb{L}^{n+1},\overline{g})$. Note that $(\Bbb{R} \times_{e^t} \Bbb{R}^{n-1},g)$ is isometric to $(\Bbb{H}^n,g_{\Bbb{H}^n})$ via the isometry $G: (\Bbb{R} \times_{e^t} \Bbb{R}^{n-1},g) \longrightarrow (\Bbb{H}^n,g_{\Bbb{H}^n})$ given by $G(t,x)=(x,e^{-t})$. We derive the potential field $V_{\Bbb{R} \times_{e^t} \Bbb{R}^{n-1}}$ of $\Bbb{R} \times_{e^t} \Bbb{R}^{n-1}$ by taking the tangential projection of the conformal vector field $\overline{V}$ on $\Bbb{L}^{n+1}$ as prescribed by equation~\eqref{campoconfRn+1}. Thence, for any $y=(t,x) \in \Bbb{R} \times_{e^t} \Bbb{R}^{n-1}$, the potential field of $\Bbb{R} \times_{e^t} \Bbb{R}^{n-1}$ is given by
\begin{equation}\label{CampopotencialRetRn}
V_{\Bbb{R} \times_{e^t} \Bbb{R}^{n-1}}(y) = dG^{-1}(V_{\Bbb{H}^n}(G(y))).
\end{equation}
Bearing equation~\eqref{CampopotencialSn1} in mind, we perceive that equation~\eqref{CampopotencialRetRn} now reads
\begin{equation}\label{CampopotencialRetRn1}
V_{\Bbb{R} \times_{e^t} \Bbb{R}^{n-1}}(y) = dG^{-1}(\vec{\gamma}^T)+dG^{-1}(B\overrightarrow{G(y)}).
\end{equation}
Since $G$ is an isometry, by Example~\ref{exem2}, we have that
\begin{equation*}
g=G^{*} g_{\Bbb{H}^n},
\end{equation*}
\begin{equation*}
Ric_{g}=G^{*} Ric_{\Bbb{H}^n} = -(n-1)G^{*} g_{\Bbb{H}^n}=-(n-1)g,
\end{equation*}
\begin{equation*}
\mathcal{L}_{V_{\Bbb{R} \times_{e^t} \Bbb{R}^{n-1}}}g=G^{*} \mathcal{L}_{V_{\Bbb{H}^n}}g_{\Bbb{H}^n} = 2 (h_{\vec{\gamma}} \circ G) G^{*}g_{\Bbb{H}^n} =  2 (h_{\vec{\gamma}} \circ G) g,
\end{equation*}
\begin{equation}\label{funcaosolitonRetRn}
\lambda_{\Bbb{R} \times_{e^t} \Bbb{R}^{n-1}} = \lambda_{\vec{\gamma}} \circ G = -(n-1)+(h_{\vec{\gamma}} \circ G).
\end{equation}
Thus, $(\Bbb{R} \times_{e^t} \Bbb{R}^{n-1},g)$ has a structure of a Ricci almost soliton with potential field given by~\eqref{CampopotencialRetRn1} and a soliton function given by \eqref{funcaosolitonRetRn}.
\end{exem}


\begin{exem}\label{exem4}
	Consider now the pseudo-hyperbolic space $(\Bbb{R} \times_{cosh t} \Bbb{H}^{n-1},g)$ of negative type as a hypersurface in the  Lorentz space $(\Bbb{L}^{n+1},\overline{g})$.  We know that $(\Bbb{R} \times_{cosht} \Bbb{H}^{n-1}, g)$ isometric to $(\Bbb{H}^n,g_{\Bbb{H}^n})$  via the isometry $P: (\Bbb{R} \times_{cosht} \Bbb{H}^{n-1},g) \longrightarrow (\Bbb{H}^n,g_{\Bbb{H}^n})$ given by $P(t,x)=(x,cosh^{-1}t)$. We construct the potential field $V_{\Bbb{R} \times_{cosht} \Bbb{H}^{n-1}}$ on $\Bbb{R} \times_{cosht} \Bbb{H}^{n-1}$ by taking the tangential projection of the conformal field $\overline{V}$ of $\Bbb{L}^{n+1}$ as given in equation~\eqref{campoconfRn+1}. We have that, for all $y=(t,x) \in \Bbb{R} \times_{cosht} \Bbb{H}^{n-1}$, the potential field of $\Bbb{R} \times_{cosht} \Bbb{H}^{n-1}$ is given by
	\begin{equation}\label{CampopotencialRcoshtHn}
	V_{\Bbb{R} \times_{cosht} \Bbb{H}^{n-1}}(y) = dP^{-1}(V_{\Bbb{H}^n}(P(y))).
	\end{equation}
	Again, in the spirit of equation~\eqref{CampopotencialSn1}, equation~\eqref{CampopotencialRcoshtHn} now reads
	\begin{equation}\label{CampopotencialRcoshtHn1}
	V_{\Bbb{R} \times_{cosht} \Bbb{H}^{n-1}}(y) = dP^{-1}(\vec{\gamma}^T)+dP^{-1}(B\overrightarrow{P(y)}).
	\end{equation}
 As $P$ is an isometry, by  Example~\ref{exem2}, we have that
	\begin{equation*}
	g=P^{*} g_{\Bbb{H}^n},
	\end{equation*}
	\begin{equation*}
	Ric_{g}=P^{*} Ric_{\Bbb{H}^n} = -(n-1)P^{*} g_{\Bbb{H}^n}=-(n-1)g,
	\end{equation*}
	\begin{equation*}
	\mathcal{L}_{V_{\Bbb{R} \times_{cosht} \Bbb{H}^{n-1}}}g=P^{*} \mathcal{L}_{V_{\Bbb{H}^n}}g_{\Bbb{H}^n} = 2 (h_{\vec{\gamma}} \circ P) P^{*}g_{\Bbb{H}^n} =  2 (h_{\vec{\gamma}} \circ P) g,
	\end{equation*}
	\begin{equation}\label{funcaosolitonRcoshtHn}
	\lambda_{\Bbb{R} \times_{cosht} \Bbb{H}^{n-1}} = \lambda_{\vec{\gamma}} \circ P = -(n-1)+(h_{\vec{\gamma}} \circ P).
	\end{equation}
 Thus, $(\Bbb{R} \times_{cosht} \Bbb{H}^{n-1},g)$ has a structure of a Ricci almost soliton with potential field given by~\eqref{CampopotencialRcoshtHn1} and a soliton function  given by \eqref{funcaosolitonRcoshtHn}.
\end{exem}

\section{A Classification Theorem}\label{conclusions}

We have now reached the apex of this paper, which is a classification of a class of Ricci almost solitons generated by a conformal vector field. We already stated this latter theorem in the introduction, and before we perceive its validity, we should like to briefly comment on how our work relates to the results in \cite{Pacific}. By so doing, we hope yet again that the reader would clearly understand the sheer motivation of the present work. To do so in the most elegant and concise form, we venture to summarise the results of the latter paper in a single theorem. For this purpose, we shall write  $\bf {{H}^{n+1}}$ for either the hyperbolic space $\mathbb{H}^{n+1}$, the de Sitter space $\mathbb{S}^{n+1}_1$ or anti-de Sitter space $\mathbb{H}^{n+1}_1$, and $\mathbb{R}^{n+2}_{\nu}$ for the semi-Euclidean space with signature $\nu=1,2$.  
\begin{thmc}\label{Pacific1}[Aquino, de Lima, Gomes~\cite{Pacific}]
Let $\psi: \Sigma^n\longrightarrow \bf{H^{n+1}}$ be a spacelike hypersurface immersed in $\bf{H^{n+1}}$. Suppose that for some nonzero vector $a\in\mathbb{R}^{n+2}_{\nu}$, the vector field $a^\top$ provides the structure of a gradient Ricci almost soliton for $\Sigma^n$. If the image of the Gauss mapping of $\Sigma^n$ lies in a totally umbilical (spacelike) hypersurface of $\bf{H^{n+1}}$ determined by $a$, then $\Sigma^n$ is a totally umbilical hypersurface of $\bf {H^{n+1}}$. 
\end{thmc}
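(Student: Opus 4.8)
The plan is to exploit the standard realization of $\mathbf{H}^{n+1}$ as a hypersurface of $\mathbb{R}^{n+2}_{\nu}$ and to read off the soliton datum from the two natural functions attached to the constant vector $a$. Write $\psi$ for the position vector, $N$ for the unit normal of $\Sigma^n$ in $\mathbf{H}^{n+1}$, $\epsilon_N=\langle N,N\rangle$, and let $c\in\{\pm1\}$ be the sectional curvature of $\mathbf{H}^{n+1}$, so that $\langle\psi,\psi\rangle=c$. First I would decompose the constant vector as $a=a^\top+\epsilon_N\ell\,N+c\,f\,\psi$, where $f=\langle\psi,a\rangle$ is the support function and $\ell=\langle N,a\rangle$ is the angle function. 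Differentiating along $X\in\mathfrak{X}(\Sigma^n)$ and using $D_X\psi=X$ together with $D_XN=-\mathcal{A}X$ (both read off from the Gauss and Weingarten formulas of $\mathbf{H}^{n+1}\hookrightarrow\mathbb{R}^{n+2}_{\nu}$ and of $\Sigma^n\subset\mathbf{H}^{n+1}$, recalling $\langle X,N\rangle=0$) one obtains $\nabla f=a^\top$, which identifies $f$ as the potential, the Weingarten-type identity $\nabla\ell=-\mathcal{A}a^\top$, and the Hessian formula
\[
\mathrm{Hess}\,f=-c\,f\,g+\epsilon_N\ell\,\mathcal{A}.
\]

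Next I would insert this into the gradient soliton equation $Ric+\mathrm{Hess}\,f=\lambda g$ and eliminate $Ric$ by means of the contracted Gauss equation \eqref{EqRicM}, namely $Ric=c(n-1)g+nH\mathcal{A}-\epsilon_N\mathcal{A}^2$. Collecting terms and using $\epsilon_N nH=\tr\mathcal{A}$ produces a single quadratic identity for the shape operator,
\[
\mathcal{A}^2-P\,\mathcal{A}+Q\,I=0,\qquad P=\tr\mathcal{A}+\ell,\quad Q=\epsilon_N\big(\lambda+cf-c(n-1)\big).
\]
Since $\mathcal{A}$ is self-adjoint, each principal curvature is a root of the scalar polynomial $t^2-Pt+Q$, whence $\Sigma^n$ has at most two distinct principal curvatures at every point. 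This is the algebraic skeleton of the argument and is essentially forced once the two preceding computations are in place.

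The geometric hypothesis enters here. The requirement that the image of the Gauss mapping lie in the totally umbilical hypersurface determined by $a$ is precisely the statement that $\ell=\langle N,a\rangle$ is constant along $\Sigma^n$; together with $\nabla\ell=-\mathcal{A}a^\top$ this forces $\mathcal{A}a^\top=0$, so that $a^\top=\nabla f$ is a null eigendirection of $\mathcal{A}$ wherever it is nonzero. I would then split into two cases. If $a^\top\equiv 0$, then $a=\epsilon_N\ell N+cf\psi$ with $\ell,f$ constant (and necessarily $\ell\neq0$, else $a=0$); differentiating this relation gives $0=D_Xa=-\epsilon_N\ell\,\mathcal{A}X+cf\,X$, that is $\mathcal{A}=\dfrac{cf}{\epsilon_N\ell}I$. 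Thus $\Sigma^n$ is totally umbilical, and since $f$ is constant it sits inside the slice $\{\langle\,\cdot\,,a\rangle=\mathrm{const}\}$, which is exactly a totally umbilical hypersurface of $\mathbf{H}^{n+1}$.

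The remaining case, where $\nabla f$ is a genuine nonzero null eigendirection while a second principal curvature $P\neq0$ persists, is the main obstacle: one must exclude a true two-curvature configuration. The plan is to differentiate both the quadratic identity and the relation $\mathcal{A}\nabla f=0$ and to feed them into the Codazzi equation for $\Sigma^n\subset\mathbf{H}^{n+1}$; the symmetry of $\nabla\mathcal{A}$ then yields a transport equation for the nonzero eigenvalue $P$ along the flow of $\nabla f$, where one uses that $|\nabla f|^2=\langle a,a\rangle-\epsilon_N\ell^2-cf^2$ is a function of $f$ alone. Combined with the connectedness of $\Sigma^n$, this should propagate the vanishing of $Q$ and collapse the two eigenvalues into one, reducing the situation to the umbilical conclusion above. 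I expect this last step to be the most delicate, and the one where the completeness/connectedness of $\Sigma^n$ and the spacelike character of the umbilical slice are genuinely needed; the earlier steps, by contrast, are a direct and essentially forced computation.
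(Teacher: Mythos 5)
First, a point of order: the paper does not prove Theorem~C at all --- it quotes it from \cite{Pacific} as motivation and explicitly refers the reader to the proofs there --- so your attempt can only be measured against the source. Your preparatory computations are correct and coincide with the lemmas used in \cite{Pacific}: the decomposition $a=a^{\top}+\epsilon_N\ell N+cf\psi$, the identities $\nabla f=a^{\top}$, $\nabla\ell=-\mathcal{A}\nabla f$ and $\mathrm{Hess}\,f=-cf\,g+\epsilon_N\ell\,\mathcal{A}$, the elimination of $Ric$ via \eqref{EqRicM} to obtain $\mathcal{A}^2-P\mathcal{A}+QI=0$, and the translation of the Gauss-map hypothesis into $\ell=\mathrm{const}$, hence $\mathcal{A}\nabla f=0$. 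The degenerate case $a^{\top}\equiv 0$ is also handled correctly.

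However, the proof is not complete: the entire content of the theorem sits in the step you only announce. On $\left\lbrace \nabla f\neq 0\right\rbrace$ you correctly get $Q=0$, so the spectrum of $\mathcal{A}$ is $\left\lbrace 0,P\right\rbrace$ with $P=\mathrm{tr}\,\mathcal{A}+\ell$; but observe that umbilicity together with $\mathcal{A}\nabla f=0$ forces $\mathcal{A}\equiv 0$, so what you must actually prove in the nontrivial case is $P=0$, and nothing you have derived does this. Concretely, take $\ell=0$, which your hypotheses permit (the totally geodesic slice $\left\lbrace x:\langle x,a\rangle=0\right\rbrace$ is a legitimate totally umbilical spacelike hypersurface determined by $a$). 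Then $\mathrm{Hess}\,f=-cf\,g$, the soliton equation reduces to $Ric=(\lambda+cf)g$, and for a shape operator with spectrum $(P,0,\dots,0)$ one has $nH\mathcal{A}-\epsilon_N\mathcal{A}^2=\epsilon_N P\mathcal{A}-\epsilon_N P\mathcal{A}=0$, so the Gauss equation gives $Ric=c(n-1)g$ and every identity in your argument is satisfied identically, for \emph{any} value of $P$. Whether Codazzi plus completeness and connectedness really excludes such a rank-one configuration is precisely the theorem, and ``this should propagate the vanishing of $Q$'' is a hope rather than an argument --- note in particular that $Q$ already vanishes on $\left\lbrace\nabla f\neq 0\right\rbrace$, so it is $P$, not $Q$, that must be transported to zero. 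Until that step is carried out, or the case $\ell=0$ is excluded or treated separately, the proposal establishes only that $\Sigma^n$ has at most two principal curvatures, one of which is zero, which falls short of umbilicity.
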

It ought to be noticed that, in the original paper, this latter theorem constitutes three theorems, each of which is neatly proven. To avoid misconceptions, we must make the following remark. In each of the original theorems, the image of the Gauss mapping of $\Sigma^n$ is supposed to  lie in an ambient space that is not necessarily the same as the ambient space in which $\Sigma^n$ is itself immersed! More precisely, the hypotheses of Theorem 4 in \cite{Pacific} require that $\Sigma^n$ be hypersurface immersed in  $\mathbb{H}^{n+1}$, and the image of the Gauss mapping of  $\Sigma^n$ lies in a totally umbilical spacelike hypersurface of $\mathbb{S}^{n+1}_1$.  Conversely, the hypotheses of Theorem 6 in \cite{Pacific} require that $\Sigma^n$ be a spacelike hypersurface immersed in $\mathbb{S}^{n+1}_1$, and the image of the Gauss mapping of  $\Sigma^n$ lies in a totally umbilical hypersurface of $\mathbb{H}^{n+1}$. In the case of Theorem 8 in \cite{Pacific} it is supposed that $\Sigma^n$ is spacelike hypersurface of $\mathbb{H}_1^{n+1}$  and the image of the Gauss mapping of  $\Sigma^n$ lies in a totally umbilical hypersurface of $\mathbb{H}_1^{n+1}$. All these latter hypotheses are important for the method of proof in \cite{Pacific}, and the reader might care to take a peek at the proofs. Even without such an effort, however, we believe that it is not difficult to observe at this juncture that our Theorem~\ref{thm1} is somewhat the converse of Theorem~C and, indeed, more general in nature. In our case, in contrast to the hypotheses in \cite{Pacific}, we substantially assume that the hypersurface $M^n$ is totally umbilical. We then prove that the Ricci almost soliton structure on $M^n$ is necessarily and sufficiently determined by the tangential part of a conformal vector field on the ambient space. 

Another interesting and important result in \cite{Pacific} is a corollary of Theorem~C.  Notice that in the original paper, it again constituted as three distinct corollaries.
\begin{cor}\label{Pacific2}[Aquino, de Lima, Gomes~\cite{Pacific}]
If $\Sigma^n$ is a complete spacelike hypersurface of $\bf{H^{n+1}}$ such that for some nonzero vector $a\in\mathbb{R}^{n+2}_\nu$, the vector field $a^\top$ provides on it a nontrivial structure of a gradient Ricci almost soliton  and the image of its Gauss mapping lies in a totally umbilical spacelike hypersurface of $\bf{H^{n+1}}$ determined by $a$, then $\Sigma^n$ is isometric to either $\mathbb{S}^n$ or $\mathbb{H}^n$  (depending on  $\bf{H^{n+1}}$) .  
\end{cor}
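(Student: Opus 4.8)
The plan is to deduce this corollary directly from Theorem~C together with Tashiro's classification (Theorem~B), the two external pillars on which the whole paper rests. The decisive observation is that the hypotheses of the corollary are exactly those of Theorem~\ref{Pacific1}, except for the additional assumption of completeness; hence Theorem~C already hands us that $\Sigma^n$ is a totally umbilic hypersurface of $\mathbf{H}^{n+1}$. From this point the argument becomes a variation on Proposition~\ref{prop1} and our own classification theorem, now read for a concrete potential.

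First I would record that, since $\mathbf{H}^{n+1}$ has constant sectional curvature $c$ and $\Sigma^n$ is totally umbilic, the mean curvature $H$ is constant, and the contracted Gauss equation~\eqref{EqRicM} gives $Ric = (n-1)(c + \epsilon_N H^2)\,g$; as $\Sigma^n$ is umbilic, all its sectional curvatures equal $k := c + \epsilon_N H^2$, so $\Sigma^n$ has constant curvature $k$ (and for a spacelike hypersurface $\epsilon_N = -1$, whence $k = c - H^2$). Next, the potential of the gradient Ricci almost soliton is the height function $h_a = \overline{g}(\vec{x}, a)$, with $\nabla h_a = a^\top$; on a constant-curvature manifold such a height function satisfies $\mathrm{Hess}\, h_a = (-k h_a + b)\,g$ for a constant $b$, i.e.\ it is a special scalar concircular field, the very same phenomenon exhibited by the angle function in Proposition~\ref{prop1}. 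Since $\Sigma^n$ is complete, Tashiro's Theorem~\ref{Tashiro} then applies verbatim and produces its five-item list.

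It remains to prune the list down to $\mathbb{S}^n$ and $\mathbb{H}^n$. The nontriviality (properness) of the soliton is what eliminates the flat alternatives: if $k = 0$ then $Ric \equiv 0$, the soliton equation degenerates to $\mathrm{Hess}\, h_a = b\,g$ with $b$ constant, forcing the soliton function $\lambda$ to be constant and the soliton to be trivial — contrary to hypothesis. This is precisely the degeneracy already visible in Example~\ref{exem1}. The sign of $k$ is then fixed by the ambient space: in $\mathbb{H}^{n+1}$ and in anti-de Sitter space one has $k = -1 - H^2 < 0$, yielding the hyperbolic case~(4) of Theorem~B, while in de Sitter space $k = 1 - H^2 > 0$ yields the spherical case~(5), so the conclusion $\mathbb{S}^n$ versus $\mathbb{H}^n$ follows accordingly.

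The main obstacle I anticipate lies in the negative-curvature branch: separating the genuine hyperbolic case~(4) from the pseudo-hyperbolic cases~(3) of Theorem~B, since both have $k = -c^2 < 0$ and are distinguished only by whether $h_a$ possesses an isolated stationary point. A stationary point of $h_a$ occurs exactly where $a^\top = 0$, i.e.\ where $a$ is normal to $\Sigma^n$; ruling out the stationary-point-free configurations therefore requires deploying the remaining hypothesis — that the image of the Gauss mapping lies in a totally umbilic spacelike hypersurface determined by $a$ — to force such a normal-coincidence point to exist and to be isolated. Carrying out this stationary-point analysis, and thereby excluding the pseudo-hyperbolic spaces of zero and negative type in favour of a bona fide $\mathbb{H}^n$, is the delicate heart of the argument; everything else is bookkeeping with the Gauss equation and Tashiro's list.
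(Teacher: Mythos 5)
This corollary is not proved in the paper at all: it is imported verbatim (consolidated from three separate corollaries) from Aquino--de Lima--Gomes~\cite{Pacific} and is quoted in Section~\ref{conclusions} only to situate the present work, so there is no internal proof to compare your argument against. Judged on its own merits, your strategy -- Theorem~C gives umbilicity, the contracted Gauss equation~\eqref{EqRicM} gives constant curvature $k=c+\epsilon_N H^2$, the height function is a special scalar concircular field, and Tashiro's Theorem~B finishes -- is a reasonable route and mirrors the way the paper assembles its own classification theorem. But it contains a concrete error and an acknowledged hole, and together they leave the stated conclusion unproved.

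The error is in the sign bookkeeping for $k$. For $\Sigma^n\subset\mathbb{H}^{n+1}$ the ambient is Riemannian, so $\epsilon_N=+1$ and $k=H^2-1$, which is \emph{not} automatically negative (geodesic spheres in $\mathbb{H}^{n+1}$ have $H^2>1$ and are round spheres; horospheres have $H^2=1$ and are flat). Likewise in the de Sitter case $\epsilon_N=-1$ gives $k=1-H^2$, not $1+H^2$, so positivity is not automatic either; only the anti-de Sitter case yields a forced sign $k=-1-H^2<0$. Hence the dichotomy ``$\mathbb{H}^{n+1}\Rightarrow$ hyperbolic, $\mathbb{S}^{n+1}_1\Rightarrow$ spherical'' does not follow from the Gauss equation alone: one must actually use the remaining hypothesis (the Gauss image lying in a fixed umbilic hypersurface determined by $a$, i.e.\ constancy of the angle function $\overline{g}(N,a)$) to control $H$ and exclude the flat and wrong-sign branches, not merely to trigger Theorem~C. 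The second problem you flag yourself: separating Tashiro's case (4) from the pseudo-hyperbolic cases (3) requires producing an isolated stationary point of $h_a$, and you leave this open. That is not a cosmetic omission -- the paper's own classification theorem and Examples~\ref{exem3} and~\ref{exem4} show that pseudo-hyperbolic spaces of zero and negative type genuinely occur for this class of solitons, so without that step the most your argument yields is the paper's four-item list A)--D), not the sharper conclusion of the corollary.
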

Now, bearing this latter discussion in mind, let us swiftly remind ourselves what has been done in this article. Firstly, by dint of Theorem \ref{thm1}, it was perceived that the Ricci almost soliton structure on a totally umbillical hypersurface isometrically immersed into a semi-Riemannian manifold is a natural manifestation of the existence of a conformal vector field on the ambient space.  Secondly, it was shown that Theorem \ref{thm1} readily manifests itself  into Examples \ref{exem1}, \ref{exem2},  \ref{exem3}, and \ref{exem4}. In other words, we have tacitly proven the converse of the above corollary. Thirdly, we saw in Theorem \ref{thm2}, which conditions guarantee that a complete Riemannian manifold isometrically immersed into a semi-Riemannian manifold and already endowed with a Ricci almost soliton structure by virtue of Theorem \ref{thm1} necessarily admits a special concircular field. To put it differently, Theorem \ref{thm2} enables us to invoke  Tashiro's theorem. Ergo, we have just perceived the truth of the classification theorem announced in the introduction. Namely, the following classification theorem holds.

\begin{theorem*}
Let $(M^n,g)$ be a complete Riemannian manifold isometrically immersed into a  semi-Riemannian manifold $(\overline{M}^{n+1}, \overline{g})$ of constant sectional curvature endowed with a conformal vector field $\overline{V}$.  Let  $\sigma$ be the conformal factor of $\overline{V}$ and $\lambda$ be the soliton function. Suppose that $\psi=\sigma|_{M^n}-\lambda$ is non-zero in some dense subset of $M^n$ and $Ric = \mu \mathcal{A}$, for some smooth function $\mu$ on $M^n$. Then, the Ricci almost soliton arising from the conformal vector field $\overline{V}$ is one of the following manifolds: 
\newline\\
A)\,\, an Euclidean space $\mathbb{R}^n$;
\newline\\
B)\,\, a sphere $\mathbb{S}^n$;
\newline\\
C)\,\, a hyperbolic space $\mathbb{H}^n$;
\newline\\
D)\,\, a pseudo-hyperbolic space of zero or negative type.
\end{theorem*}
 

\section*{Acknowledgements}

The first author is partially supported by Conselho Nacional de Desenvolvimento Científico e Tecnológico (CNPq), of the Ministry of Science, Technology and Innovation of Brazil (Grants 428299/2018-0 and 307374/2018-1). The second author is wholeheartedly acknowledging the financial support by Funda\c{c}\~ao de Amparo \`a Pesquisa do Estado do Amazonas -  FAPEAM (Grant 062.00549/2019) in terms of a doctoral scholarship. The third author  is also partially supported by CNPq (Grant  428299/2018-0), and is humbly grateful for all  vicissitudes and tribulations of life. We are also indebted to $\cdots\cdots\cdots\cdots\cdots\cdots$
, for their generosity and constructive criticism.

\end{document}